\documentclass[preprint,nonatbib]{elsarticle}
\usepackage{fullpage}
\usepackage{standalone}
\usepackage{import}

\makeatletter
\let\c@author\relax
\makeatother

\usepackage[table]{xcolor}
\usepackage{amsmath}
\usepackage{amsthm}
\usepackage{amssymb} 
\usepackage{hyperref}
\usepackage{graphicx}
\usepackage{float}
\usepackage{tikz-cd}
\usepackage{caption}
\usepackage{subcaption}
\usepackage{array}
\usepackage{extarrows}
\usepackage{multirow}
\usepackage{tikz}
\usepackage{makecell}
\usepackage{hhline}
\usepackage{ mathdots }
\usepackage{cleveref}
\usepackage{csquotes}
\usepackage{import}
\usepackage[normalem]{ulem}
\usepackage{enumitem}
\usepackage{setspace}
\usetikzlibrary{cd}
\usetikzlibrary{graphs}
\usetikzlibrary {graphs.standard}
\usetikzlibrary{shapes.geometric}

\DeclareMathOperator{\LS}{RP}

\DeclareMathOperator{\ILS}{ILS}

\DeclareMathOperator{\ROS}{ROS}

\allowdisplaybreaks[3]

\newtheorem{theorem}{Theorem}[section]
\newtheorem{lemma}[theorem]{Lemma}

\theoremstyle{definition}
\newtheorem{definition}[theorem]{Definition}

\newtheoremstyle{noparentheses}
  {\topsep}   
  {\topsep}   
  {\itshape}  
  {0pt}       
  {\bfseries} 
  {\textbf{.}}         
  {5pt plus 1pt minus 1pt} 
  {\thmname{#1} \thmnumber{#2} \thmnote{\normalfont#3}}          
\theoremstyle{noparentheses}
\newtheorem{theorem*}[theorem]{Theorem}
\newtheorem{lemma*}[theorem]{Lemma}
\newtheorem{conjecture*}[theorem]{Conjecture}

\Crefname{conjecture}{Conjecture}{Conjectures}
\Crefname{conjecture*}{Conjecture}{Conjectures}
\Crefname{append}{Appendix}{Appendices}
\Crefname{theorem*}{Theorem}{Theorems}

\numberwithin{equation}{section}

\usepackage{biblatex} 
\hypersetup{pdfauthor=author}

\makeatletter
\def\ps@pprintTitle{%
  \let\@oddhead\@empty
  \let\@evenhead\@empty
  \def\@oddfoot{\reset@font\hfil\thepage\hfil}
  \let\@evenfoot\@oddfoot
}
\makeatother

\begin{document}

\title{Latin squares with non-partitioning disjoint subsquares}
\author{Tara Kemp}
\ead{t.kemp@uq.net.au}

\address{School of Mathematics and Physics, ARC Centre of Excellence, Plant Success in Nature and Agriculture, The University of Queensland, Brisbane, Queensland, 4072, Australia}

\begin{abstract}
A latin square of order $n$ with pairwise disjoint subsquares of orders $h_1,\dots,h_k$ such that $h_1+\dots+h_k = n$ is known as a realization. The existence of realizations is a partially solved problem with a few general results for an arbitrary number of subsquares, $k$. Requiring only that $h_1+\dots+h_k\leq n$ gives a variation of the problem that has few known results. In this paper we prove a general necessary condition for existence and completely determine existence when there are at most three subsquares or the subsquares are all of the same order. Importantly, we prove that if $h_1\geq h_2\geq\dots\geq h_k$ and $n\geq h_1+\sum_{i=1}^kh_i$ then such a latin square always exists.
\end{abstract}

\begin{keyword}
latin square \sep subsquare \sep realization
\end{keyword}

\maketitle

\section{Preliminaries}

A \emph{latin square} of order $n$ is an $n\times n$ array $L$ filled with symbols from $[n]=\{1,2,\dots,n\}$ such that each symbol occurs exactly once in every row and column. A \emph{subsquare} is an $m\times m$ subarray of $L$ which is itself a latin square of order $m$ on some set of $m$ symbols. Subsquares are disjoint if they share no rows, columns or symbols.

The latin square in \Cref{fig: ILS example} is a latin square of order 8 with pairwise disjoint subsquares of orders 3, 2 and 1 highlighted.

\begin{figure}[h]
    \centering
    $\arraycolsep=4pt\begin{array}{|c|c|c|c|c|c|c|c|}\hline
    \cellcolor{lightgray}1 & \cellcolor{lightgray}2 & \cellcolor{lightgray}3 & 6 & 7 & 8 & 5 & 4\\ \hline
    \cellcolor{lightgray}2 & \cellcolor{lightgray}3 & \cellcolor{lightgray}1 & 8 & 6 & 5 & 4 & 7\\ \hline
    \cellcolor{lightgray}3 & \cellcolor{lightgray}1 & \cellcolor{lightgray}2 & 7 & 8 & 4 & 6 & 5\\ \hline
    8 & 7 & 6 & \cellcolor{lightgray}4 & \cellcolor{lightgray}5 & 1 & 2 & 3\\ \hline
    7 & 6 & 8 & \cellcolor{lightgray}5 & \cellcolor{lightgray}4 & 2 & 3 & 1\\ \hline
    5 & 8 & 4 & 3 & 1 & \cellcolor{lightgray}6 & 7 & 2\\ \hline
    6 & 4 & 5 & 2 & 3 & 7 & 1 & 8\\ \hline
    4 & 5 & 7 & 1 & 2 & 3 & 8 & 6\\ \hline
    \end{array}$
    \caption{A latin square of order 8 with disjoint subsquares}
    \label{fig: ILS example}
\end{figure}

Given an integer partition $P=(h_1\dots h_k)$ of $n$, a \emph{realization} of $P$, denoted $\LS(h_1\dots h_k)$, is a latin square of order $n$ with pairwise disjoint subsquares of orders $h_1,\dots,h_k$.

The study of realizations began with a question posed by L.~Fuchs \cite{keedwell2015latin} regarding quasigroups with disjoint subquasigroups, and the problem of existence of a realization is partially solved. Realizations are known by a few names including \emph{partitioned incomplete latin squares} (PILS). This name corresponds to a variation of the problem which considers \emph{incomplete latin squares} (ILS): latin squares with pairwise disjoint subsquares that do not necessarily partition the order of the latin square. It is this variation that we consider here.

Given an integer partition $(h_1\dots h_k)$ of $m$, an \emph{incomplete latin square}, denoted by $\ILS(n;h_1\dots h_k)$, is a latin square of order $n\geq m$ with $k$ pairwise disjoint subsquares of orders $h_1,\dots,h_k$. The latin square in \Cref{fig: ILS example} is an $\ILS(8;3^12^11^1)$.

Note that the term \emph{disjoint} is sometimes used to refer to subsquares that do not intersect (do not share any cells but may share symbols, rows or columns). Our definition of disjoint is inherited from quasigroups and should not be confused with non-intersecting. The author would also like to acknowledge that the name \emph{incomplete latin square} is unsatisfactory since it is sometimes used to refer to partial latin squares and it does not effectively describe the object it refers to.

Unless otherwise stated, we assume that $h_1\geq h_2\geq \dots\geq h_k > 0$. Also, the partition notation $(h_1^{m_1}h_2^{m_2}\dots h_\ell^{m\ell})$ represents a partition with $m_i$ parts of size $h_i$ for all $i\in[\ell]$. Throughout, we assume that all realizations and incomplete latin squares are in \emph{normal form}: where the subsquares appear along the main diagonal, the $i^{th}$ subsquare is of order $h_i$, and for $i<j$ the symbols from the $i^{th}$ subsquare are less than the symbols from the $j^{th}$ subsquare.

There isn't much known about incomplete latin squares where the subsquares do not partition the order of the latin square. For ILS with a single subsquare, i.e. $k=1$, existence was determined by Evans \cite{evans1960embedding}.

\begin{theorem*}[\cite{evans1960embedding}]
\label{thm: single subsquare}
    There exists a latin square of order $n$ with a subsquare of order $m<n$ if and only if $m\leq \frac{n}{2}$.
\end{theorem*}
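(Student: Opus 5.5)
We prove the two directions separately: a counting argument for necessity and an explicit completion for sufficiency.

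\textbf{Necessity.} Suppose $L$ has order $n$ and, in normal form, an $m\times m$ subsquare $S$ in the top-left corner on symbols $[m]$, with $m<n$. Consider the $m\times(n-m)$ block formed by the first $m$ rows and the last $n-m$ columns.

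In each of the first $m$ rows, the symbols of $[m]$ already appear inside $S$. So this block contains only symbols from $\{m+1,\dots,n\}$, and each such symbol appears exactly once in each of these $m$ rows. Hence each symbol $s>m$ occurs exactly $m$ times in the block, in $m$ distinct columns.

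Since $s$ occurs at most once in each column, these $m$ occurrences need $m$ distinct columns among the $n-m$ available. Therefore $m\le n-m$, that is, $m\le n/2$. The argument only uses that there is at least one symbol $s>m$, which holds because $m<n$.

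\textbf{Sufficiency.} Assume $m\le n/2$. We build an $n\times n$ latin square with $[m]$ in the top-left $m\times m$ block.

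1. Place a latin square of order $m$ on symbols $[m]$ in rows and columns $1,\dots,m$.
2. Fill the top-right $m\times(n-m)$ block with rows taken from a cyclic latin square on symbols $\{m+1,\dots,n\}$; for example, set $L(i,m+j)=m+((i+j)\bmod (n-m))+1$. These $m$ rows are distinct cyclic shifts, so no symbol repeats in any column of the block.
3. Complete the first $m$ rows to an $n\times n$ latin square, so that the top-left block stays a subsquare.

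Step 3 is the only part that needs care. One option is Ryser's theorem: an $r\times s$ latin rectangle (here $r=m$, $s=n$) completes to an $n\times n$ latin square if and only if every symbol appears at least $r+s-n$ times. Here $r+s-n=m$. Each symbol in $[m]$ appears $m$ times, once per row inside $S$. Each symbol greater than $m$ appears once per row, so also $m$ times. The condition holds, so the rectangle completes.

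The row-by-row route avoids Ryser's theorem. Because the first $m$ rows form an $m\times n$ latin rectangle, Hall's theorem extends any $r\times n$ latin rectangle with $r<n$ to a latin square. This route needs only that the first $m$ rows are a latin rectangle; the bound $m\le n/2$ is used only in step 2, which requires $n-m\ge m$ so that the top-right block can have $m$ rows with no column repetition.

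A fully explicit alternative is a direct-product-style filling. When $m\mid n$ one can take $L_m\times L_{n/m}$, and the general case can be handled with cyclic constructions.

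The only real obstacle is the completion in step 3, and invoking Hall's theorem (or Ryser's theorem) handles it.
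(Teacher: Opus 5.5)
\textbf{Verdict.} Your proof is correct. The paper gives no proof of this statement; it quotes Evans and uses the result as a black box, so there is no argument in the paper to match yours against.

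\textbf{How your argument goes.} It is the standard one. Necessity comes from counting the symbols $s>m$ in the $m\times(n-m)$ block. Sufficiency comes from a cyclic filling of that block, followed by Hall's theorem on completing an $m\times n$ latin rectangle with $m<n$. The cyclic filling is the only place where $n-m\ge m$ is used.

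\textbf{Minor remarks.}
\begin{itemize}
\item Your appeal to Ryser's theorem on the $m\times n$ rectangle is vacuous. Every full-width $m\times n$ latin rectangle on $[n]$ contains each symbol exactly $m=r+s-n$ times, so that use is just Hall's theorem again.
\item Ryser is more economical applied directly to the $m\times m$ block. There the condition for the $n-m\ge 1$ symbols absent from the subsquare reads $0\ge 2m-n$, which gives both directions in one line.
\item The closing remark about direct products and ``cyclic constructions'' is unsubstantiated when $m\nmid n$. It is also unnecessary, so drop it.
\end{itemize}

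\textbf{How the paper's framework would prove it.} The natural proof is the $k=1$ analogue of the proof of \Cref{thm: ILS(n;h1h2)}. Take $P=(m,\,n-m)$ and require $O_1(1,1)=m^2$.
\begin{itemize}
\item Row $1$ and column $1$ must each contain exactly $m^2$ copies of symbol $1$, all of which lie in cell $(1,1)$. This forces $O(1,2)$ and $O(2,1)$ to consist of $m(n-m)$ copies of symbol $2$.
\item Row $2$ then needs all its $m(n-m)$ copies of symbol $1$ in cell $(2,2)$. That cell therefore holds $m(n-m)$ copies of $1$ and $(n-m)^2-m(n-m)$ copies of $2$.
\item This is a valid outline square if and only if $n-m\ge m$.
\end{itemize}
\Cref{lemma: ILS from outline}, via Hilton's lifting theorem, then yields necessity and sufficiency simultaneously.

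\textbf{Comparison.} The outline-square route is shorter on the page, but it hides the matching argument inside Hilton's theorem. Your route is self-contained apart from Hall's theorem, and it gives an explicit description of the first $m$ rows of the square.
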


All other work on incomplete latin squares comes from research into mutually orthogonal latin squares with holes. For example, \cite{dukes2015mutually} considers mutually orthogonal latin squares with a single hole.

Since realizations are incomplete latin squares, we list here a few important results for realizations from Heinrich \cite{heinrich1982disjoint,heinrich2006latin}, D{\'e}nes and P{\'a}sztor \cite{denes1963some}, and Kuhl and Schroeder \cite{kuhl2018latin}. These concern realizations with few subsquares or with subsquares of at most two orders.

\begin{theorem*}[\cite{heinrich2006latin}]
\label{thm: small k squares}
    Take a partition $(h_1h_2\dots h_k)$ of $n$ with $h_1\geq h_2\geq \dots\geq h_k > 0$. Then an $\LS(h_1h_2\dots h_k)$
    \begin{itemize}
        \item always exists when $k=1$;
        \item never exists when $k=2$;
        \item exists when $k=3$ if and only if $h_1=h_2=h_3$;
        \item exists when $k=4$ if and only if $h_1=h_2=h_3$, or $h_2=h_3=h_4$ with $h_1\leq 2h_4$.
    \end{itemize}
\end{theorem*}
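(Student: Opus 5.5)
The plan is to obtain the necessary conditions from a block-counting argument on the normal form, and then to give explicit constructions for sufficiency. Write $R_i$, $C_i$, $S_i$ for the rows, columns and symbols of the $i$th subsquare. Since $\sum h_i=n$, these sets partition the rows, columns and symbols respectively.

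\textbf{Counting.} Consider the block $R_i\times C_j$ with $i\neq j$.
\begin{itemize}
\item It contains no symbol of $S_i$, because each row of $R_i$ already uses all of $S_i$ inside $C_i$.
\item It contains no symbol of $S_j$, because each column of $C_j$ already uses all of $S_j$ inside $R_j$.
\end{itemize}
Hence each column of $C_j$, restricted to the $h_i$ rows of $R_i$, holds $h_i$ distinct symbols from $\bigcup_{s\neq i,j}S_s$. This gives
\[ h_i\le n-h_i-h_j \quad\text{for all } i\neq j. \]

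\textbf{Consequences for small $k$.}
\begin{itemize}
\item For $k=1$ the square is trivially a realization.
\item For $k=2$ the bound reads $h_1\le 0$, which is impossible.
\item For $k=3$, taking $i=1$, $j=2$ gives $h_1\le h_3$, which forces $h_1=h_2=h_3$.
\item For $k=4$ the same choice gives only $h_1\le h_3+h_4$. This is too weak: for instance $(3,3,2,2)$ satisfies it but must be excluded. So a finer count is needed.
\end{itemize}

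\textbf{Refined count for $k=4$.} Let $x^{s}_{ij}$ be the number of cells of $R_i\times C_j$ containing a symbol of $S_s$, where $s\notin\{i,j\}$.
\begin{itemize}
\item Row balance: each symbol of $S_s$ occurs once in each of the $h_i$ rows of $R_i$, and never inside $C_i$ or $C_s$. So $\sum_{j} x^{s}_{ij}=h_ih_s$, summing over $j\notin\{i,s\}$.
\item Column balance: symmetrically, $\sum_i x^{s}_{ij}=h_jh_s$.
\item Cell filling: $\sum_s x^{s}_{ij}=h_ih_j$.
\end{itemize}
For $k=4$, each block $R_i\times C_j$ has exactly two admissible symbol classes, so the system is small. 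I would solve it explicitly in terms of the $h$'s, using that each $x^s_{ij}$ is determined up to a few free parameters. Imposing $x^s_{ij}\ge 0$ on the solution should leave exactly two possibilities: $h_1=h_2=h_3$, or $h_2=h_3=h_4$ with $h_1\le 2h_4$. The inequality $h_1\le 2h_4$ should come from the block $R_1\times C_2$, where the $S_3$ and $S_4$ counts must each fit.

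I expect this elimination to be the main obstacle. What is needed is to show that when $h_2>h_3$ or $h_3>h_4$ (and not $h_1=h_2=h_3$), some $x^s_{ij}$ is forced negative. I would first try adding the row-balance equations for $s=3$ and $s=4$ and subtracting the corresponding column-balance equations, to isolate a single variable whose sign can be read off.

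\textbf{Sufficiency.}
\begin{itemize}
\item $k=3$ with $h_1=h_2=h_3=h$: inflate a latin square of order $3$ by a latin square of order $h$ (direct product). The three diagonal $h\times h$ blocks then give disjoint subsquares.
\item $k=4$ with $h_1=h_2=h_3=a\ge h_4=b$: start from an idempotent-like order-$4$ pattern. Fill the off-diagonal blocks with suitable latin rectangles built from cyclic shifts, choosing the symbol classes to match the counts forced above.
\item $k=4$ with $h_2=h_3=h_4=b$ and $h_1\le 2b$: use the same strategy. The equality case $h_1=2b$ saturates block $R_1\times C_2$ with symbols from $S_3\cup S_4$. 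For smaller $h_1$, replace part of that fill with a cyclic (back-circulant) filling.
\end{itemize}

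Each construction reduces to filling blocks with latin rectangles whose row and column symbol multisets are prescribed by the $x^s_{ij}$. Completability can then be verified by Hall's theorem, or equivalently by edge-colouring bipartite multigraphs.
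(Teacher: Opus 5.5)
The paper does not prove this statement; it is quoted from Heinrich \cite{heinrich2006latin}, so your proposal has to stand on its own. The $k\le 3$ parts are fine, with one small fix. For $k=3$, the order-$3$ square you inflate must have a transversal on its diagonal, e.g.\ $L(i,j)=i+j \bmod 3$; otherwise the diagonal blocks need not use disjoint symbol sets.

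\textbf{Necessity for $k=4$.} You never carry out the elimination, and the mechanism you expect is not the real one. The bad cases are not excluded by some $x^s_{ij}$ being forced negative. For $(3,3,2,2)$ your linear system has no real solution at all. The missing step is one double count of the symbols of $S_3\cup S_4$ in the rows $R_1\cup R_2$:
\begin{itemize}
\item In total they occur $(h_1+h_2)(h_3+h_4)$ times there.
\item Inside $C_1\cup C_2$ they fill exactly the blocks $R_1\times C_2$ and $R_2\times C_1$, giving $2h_1h_2$ cells.
\item Inside $C_3\cup C_4$, symbols of $S_3$ can lie only in $C_4$ and those of $S_4$ only in $C_3$. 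All $h_3h_4$ occurrences of each in those columns lie in $R_1\cup R_2$, since $R_3$ and $R_4$ are excluded. This gives $2h_3h_4$.
\end{itemize}
Hence $(h_1+h_2)(h_3+h_4)=2h_1h_2+2h_3h_4$, which rearranges to $(h_1-h_3)(h_2-h_4)+(h_1-h_4)(h_2-h_3)=0$. Both products are nonnegative by the ordering, so $(h_1-h_3)(h_2-h_4)=0$. Thus $h_1=h_2=h_3$ or $h_2=h_3=h_4$. In the second case your first bound $h_1\le h_3+h_4$ is exactly $h_1\le 2h_4$. You were right that this inequality comes from block $R_1\times C_2$.

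\textbf{Sufficiency for $k=4$.} Filling the off-diagonal blocks separately with latin rectangles and checking each by Hall does not work as stated. The blocks in one row strip must jointly give every row each symbol exactly once, and the counts $x^s_{ij}$ do not say which symbols go in which rows. The tool that does this coordination is Hilton's lifting theorem (\Cref{thm: outline rectangle to square}). With it, you only need nonnegative integers $x^s_{ij}$ satisfying your three balance conditions, with $h_i^2$ copies of $i$ in each diagonal block. These you never exhibit. One choice:
\begin{itemize}
\item For $(a,a,a,b)$ take $T_{124}=T_{134}=T_{234}=ab/2$ and $T_{123}=a(2a-b)/2$.
\item For $(a,b,b,b)$ take $T_{123}=T_{124}=T_{134}=ab/2$ and $T_{234}=b(2b-a)/2$.
\end{itemize}
When $ab$ is even, put $x^\ell_{ij}=T_{\{i,j,\ell\}}$. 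When $ab$ is odd, put $x^\ell_{ij}=T_{\{i,j,\ell\}}+\frac12\varepsilon(i,j,\ell,m)$. Here $m$ is the fourth index and $\varepsilon$ is the sign of $(i,j,\ell,m)$ as a permutation of $(1,2,3,4)$. Every balance equation pairs two such permutations differing by a transposition, so the $\pm\frac12$ terms cancel. In the odd case every $T\ge\frac12$, using $2b-a\ge 1$, so all entries are nonnegative integers.
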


\begin{theorem*}[\cite{denes1963some}]
\label{thm: a^k square}
    For $k\geq 1$ and $a\geq 1$, an $\LS(a^k)$ exists if and only if $k\neq 2$.
\end{theorem*}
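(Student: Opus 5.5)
The plan is to treat the two directions separately: a short counting argument rules out $k=2$, and a direct product construction handles every $k\neq 2$.

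\textbf{Nonexistence for $k=2$.} Suppose an $\LS(a^2)$ of order $2a$ exists, in normal form. The first subsquare occupies rows and columns $1,\dots,a$ and uses a symbol set $S_1$. The second occupies rows and columns $a+1,\dots,2a$ and uses $S_2=[2a]\setminus S_1$.
\begin{itemize}
\item Row $1$ already contains every symbol of $S_1$ in columns $1,\dots,a$. So its entries in columns $a+1,\dots,2a$ all lie in $S_2$.
\item In particular, cell $(1,a+1)$ contains a symbol of $S_2$.
\item Column $a+1$ already contains every symbol of $S_2$ in rows $a+1,\dots,2a$, since these are cells of the second subsquare.
\end{itemize}
Hence some symbol appears twice in column $a+1$, a contradiction. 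The case $k=1$ is trivial: take any latin square of order $a$.

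\textbf{Existence for $k\neq 2$.} First I will produce a latin square $M$ of order $k$ whose main diagonal is a transversal, i.e.\ the diagonal contains every symbol of $[k]$ exactly once. Such an $M$ is obtained from any latin square of order $k$ that has a transversal, by permuting columns so that the transversal lies on the diagonal. Then take any latin square $A$ of order $a$ and form the direct product $M\times A$. This has order $ka$, rows and columns indexed by $[k]\times[a]$, and entry $\big(M(i,j),A(r,s)\big)$ in cell $((i,r),(j,s))$; it is routine to check that it is latin.

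The $a\times a$ block in position $(i,i)$ is a copy of $A$ on the symbol set $\{M(i,i)\}\times[a]$. These blocks use pairwise disjoint row sets and column sets. Because the diagonal of $M$ is a transversal, the values $M(i,i)$ are distinct, so the blocks also use pairwise disjoint symbol sets. Relabelling symbols to put the square in normal form gives an $\LS(a^k)$.

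\textbf{Main obstacle: a latin square of order $k$ with a transversal, for every $k\neq 2$.}
\begin{itemize}
\item For odd $k$, the cyclic square $M(i,j)=i+j \pmod k$ works: its main diagonal $2i$ runs over all residues because $2$ is invertible modulo $k$.
\item For even $k$, the cyclic group table has no transversal, so a different source is needed. For $k\neq 2,6$, I would invoke the existence of a pair of orthogonal latin squares of order $k$ (Bose--Shrikhande--Parker). Any square with an orthogonal mate decomposes into transversals.
\item For $k=6$, where no orthogonal pair exists, I would exhibit an explicit latin square of order $6$ with a transversal; a single transversal is enough.
\item If I want to avoid citing the orthogonal-pairs theorem for even $k$, I would instead build idempotent latin squares of every even order $k\ge 4$ directly. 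One possible route is to prolong an idempotent square of order $k-1$ along a transversal disjoint from its diagonal, but this needs separate checking.
\end{itemize}
This even-order step, particularly making the argument self-contained at $k=6$, is where I expect the real work to lie.
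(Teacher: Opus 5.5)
The paper does not prove this statement: it is quoted from D{\'e}nes and P{\'a}sztor and used as a black box, for instance in the proof of the $\ILS(n;h_1^k)$ theorem and of the outline-array lemma in the general construction. So there is nothing to compare against, and your argument has to stand on its own. It does; it is a correct self-contained proof.

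Your $k=2$ obstruction via cell $(1,a+1)$ is right. It is also the $k=2$ clause of Heinrich's theorem quoted in the preliminaries. For $k\ge 3$ you reduce to an $\LS(1^k)$, that is, a latin square of order $k$ with a transversal, and then take the direct product with any latin square of order $a$. This is the standard route, and the diagonal blocks are disjoint in rows, columns and symbols exactly as you say.

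The only step you leave unexecuted is the even case, and it is much easier than you expect. You need neither Bose--Shrikhande--Parker nor an ad hoc order-$6$ square, and your own fallback, prolongation, needs no extra checking.

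Let $n=k-1\ge 3$ be odd and $C(i,j)=i+j \pmod n$. Then $T_0=\{(i,i)\}$ and $T_1=\{(i,i+1)\}$ are disjoint transversals, since their symbols $2i$ and $2i+1$ each run over $\mathbb{Z}_n$ when $n$ is odd. Prolong along $T_0$:
\begin{itemize}
\item put a new symbol $\infty$ in each cell $(i,i)$;
\item write the displaced symbol $2i$ in the new cells $(i,\infty)$ and $(\infty,i)$;
\item put $\infty$ in $(\infty,\infty)$.
\end{itemize}
The result is a latin square of order $k$. The cells of $T_1$ are unchanged, so $T_1\cup\{(\infty,\infty)\}$ is a transversal.

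This handles every even $k\ge 4$, including $k=6$, in one stroke. As written, ``I would exhibit an explicit latin square of order $6$'' is a promise rather than a proof. Include this construction, or an explicit square, to close the argument.
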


\begin{theorem*}[\cite{heinrich1982disjoint,kuhl2018latin}]
\label{thm: squaresatmost2}
    For $a>b>0$ and $k>4$, an $\LS(a^ub^{k-u})$ exists if and only if $u\geq 3$, or $0< u < 3$ and $a\leq (k-2)b$.
\end{theorem*}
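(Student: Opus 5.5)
The statement splits into necessity and sufficiency. I would prove necessity by counting arguments inside $\LS(a^ub^{k-u})$ in normal form, and sufficiency by explicit constructions. In normal form, write $A_1,\dots,A_u$ for the subsquares of order $a$ and $B_1,\dots,B_{k-u}$ for those of order $b$. The basic observation is this: if $H$ is a subsquare, then the symbols of $H$ occupy, in the rows of $H$, exactly the columns of $H$, and likewise in the columns of $H$ they occupy exactly the rows of $H$. So no symbol of $H$ lies in a cell that shares a row or a column with $H$ but lies outside $H$.

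\textbf{Necessity.} I would treat $u=2$ and $u=1$ separately.

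For $u=2$, consider the $a\times a$ block formed by the rows of $A_1$ and the columns of $A_2$. No symbol of $A_1$ appears there, since the block is in rows of $A_1$ but outside it, and no symbol of $A_2$ appears, since it is in columns of $A_2$ but outside it. Hence every entry is a symbol of some $B_j$. A row of this block has $a$ cells holding distinct symbols from a set of $(k-2)b$ symbols, which forces $a\le (k-2)b$.

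For $u=1$, fix a row of $B_1$. The $a$ symbols of $A_1$ must appear in this row. They cannot lie in the columns of $A_1$ or in the columns of $B_1$, so they must lie in columns of $B_2,\dots,B_{k-1}$. That gives $a\le (k-2)b$.

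For $u\ge 3$ there is no condition to check.

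\textbf{Sufficiency.} I would proceed by cases on $u$, aiming for a recursive or product construction.

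For $u\ge3$, first handle the equal-order pieces. By the theorem of D\'enes and P\'asztor, $\LS(a^u)$ and $\LS(b^{k-u})$ exist when $u\neq 2$ and $k-u\neq2$. I would glue such pieces into one square. My first idea is to use a transversal-based ``prolongation'': start from a suitable $\LS(a^u)$ built from a latin square of order $u$ with a transversal structure, then insert the $b$-subsquares by replacing cells along disjoint partial transversals. The awkward residues, such as $k-u=2$ and small $k$, would be covered by a few direct constructions.

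For $u\in\{1,2\}$ with $a\le (k-2)b$, I would write $a=qb+r$ with $0\le r<b$. The plan is to build the $B_j$ as blocks of a cyclic group-based square of order $(k-u)b$ with $b$-subsquares, and then reserve an $a\times a$ region (or two) that is filled using the $(k-2)b$ available symbols of the $B_j$. The tightness in the counting argument suggests the reserved region should be a latin rectangle on exactly those symbols, completed by Ryser-type or Hall-type arguments.

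The main obstacle is this last step: showing that the partial square, once the large subsquares are prescribed, can be completed for every $a$ up to $(k-2)b$, especially when $b\nmid a$. I would first try an inductive construction on $k$. This would reduce $\LS(a^ub^{k-u})$ to $\LS(a^ub^{k-u-1})$, adding one $b$-subsquare at a time by prolongation along $b$ disjoint transversals avoiding the existing subsquares. The base cases $k=5$ would be settled directly, with Heinrich's $k=4$ classification as a guide.
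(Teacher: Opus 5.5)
Your necessity argument is correct and complete. In the rows of $A_1$ and the columns of $A_2$, every entry must be a symbol of some $B_j$, which gives $a\le (k-2)b$ for $u=2$. For $u=1$, the $a$ symbols of $A_1$ in a row of $B_1$ are confined to the $(k-2)b$ columns of $B_2,\dots,B_{k-1}$. Both are the general bound $h_1\le n-h_1-h_2$ specialised. (The statement tacitly requires $u\ge 1$: for $u=0$ the square is an $\LS(b^k)$, which exists for $k>4$.)

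The paper does not prove this theorem; it quotes it from Heinrich and from Kuhl and Schroeder. So your proposal has to stand on its own, and the sufficiency half does not. You give no construction for any case, only strategies you would try, and you leave the decisive completion step open yourself. That step is the whole difficulty of the theorem, so as written only the ``only if'' direction is proved.

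The routes you sketch also fail as stated, precisely in the hard cases.
\begin{itemize}
\item \emph{Induction on $k$.} Adding one $b$-subsquare needs an $\LS(a^ub^{k-u-1})$ as input. For $u\in\{1,2\}$ and $(k-3)b<a\le(k-2)b$, that square does not exist by your own necessity argument. At $k=5$, Heinrich's $k=4$ classification excludes $\LS(a^2b^2)$ outright and $\LS(ab^3)$ whenever $a>2b$. So the extremal cases form an infinite family of ``base cases'' the induction never reaches. Even where the input exists, you would still need $b$ pairwise disjoint transversals missing every cell of every existing subsquare, and you do not produce them.
\item \emph{Ryser-type completion.} Ryser's theorem concerns completing a latin rectangle, whereas here the prescribed cells are diagonal blocks, so it does not apply.
\item \emph{The cyclic square carrying the $B_j$.} If this square of order $(k-u)b$ is meant to occupy the rows and columns of the $B_j$, it is a subsquare on the $B$-symbols. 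Then the cells in those rows and in the columns of $A_1$ can contain neither $A_1$-symbols nor $B$-symbols. For $u=1$ this is impossible. For $u=2$ those columns would have to carry $(k-2)b$ distinct $A_2$-symbols, forcing $(k-2)b\le a$, hence $a=(k-2)b$.
\item \emph{Gluing for $u\ge3$, $k-u=2$.} The rows and columns of $B_1\cup B_2$ can never form a subsquare, since it would be an $\LS(b^2)$. So these cases cannot come from gluing an $\LS(a^u)$ to an $\LS(b^{k-u})$, and they form an infinite family rather than ``a few'' exceptions.
\end{itemize}

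What is missing is an actual construction covering all $u\ge3$ and all $u\in\{1,2\}$ up to $a=(k-2)b$. In this paper's framework it would suffice to exhibit an outline square associated to $(a^ub^{k-u})$ with $O_i(i,i)=h_i^2$ for every $i$, and lift it by \Cref{thm: outline rectangle to square}. Those non-negative integer cell counts still have to be written down and checked.
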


For incomplete latin squares, we make progress in a similar way to realizations. In \Cref{sec: necessary cond} we prove a necessary condition for ILS that is similar to the best known condition for realizations. The results in \Cref{sec: small cases} determine existence for ILS with at most three subsquares or with subsquares of only a single order. In \Cref{sec: general con} we give a general construction for all incomplete latin squares with disjoint subsquares of orders $h_1,\dots,h_k$ where $n\geq h_1+\sum_{i=1}^kh_i$.

\section{Outline squares and frequency arrays}

In this section we define two important tools that are used throughout: outline rectangles and frequency arrays.

\begin{definition}
    Given partitions $P,Q,R$ of $n$, where $P = (p_1\dots p_u)$, $Q = (q_1\dots q_v)$, $R = (r_1\dots r_t)$, let $O$ be a $u\times v$ array of multisets, with elements from $[t]$. For $i\in [u]$ and $j\in[v]$, let $O(i,j)$ be the multiset of symbols in cell $(i,j)$ and let $|O(i,j)|$ be the number of symbols in the cell, including repetition.

    Then $O$ is an \emph{outline rectangle} associated to $(P,Q,R)$ if
    \begin{enumerate}[label=\textup{(\arabic*)}]
        \item $|O(i,j)| = p_iq_j$, for all $(i,j)\in[u]\times[v]$;
        \item symbol $\ell\in[t]$ occurs $p_ir_\ell$ times in the row $(i,[v])$;
        \item symbol $\ell\in[t]$ occurs $q_jr_\ell$ times in the column $([u],j)$.
    \end{enumerate}
\end{definition}

If $P=Q=R$, then we say that $O$ is an \emph{outline square}.

The array of multisets in \Cref{fig: outline rectangle example} is an outline square associated to $(3^1 2^1 1^3)$.

\begin{figure}[H]
    \centering
    $\arraycolsep=4pt\begin{array}{|ccc|cc|c|c|c|}\hline
    1 & 1 & 1 & 3 & 3 & 2 & 2 & 2\\ 
    1 & 1 & 1 & 4 & 4 & 2 & 2 & 2\\ 
    1 & 1 & 1 & 5 & 5 & 5 & 3 & 4\\ \hline
    3 & 3 & 4 & 2 & 2 & 1 & 1 & 1\\ 
    4 & 5 & 5 & 2 & 2 & 1 & 1 & 1\\ \hline
    2 & 2 & 5 & 1 & 1 & 3 & 4 & 1\\ \hline
    2 & 2 & 3 & 1 & 1 & 4 & 1 & 5\\ \hline
    2 & 2 & 4 & 1 & 1 & 1 & 5 & 3\\ \hline
    \end{array}$
    \caption{An outline square associated to $(3^1 2^1 1^3)$.}
    \label{fig: outline rectangle example}
\end{figure}

Hilton \cite{hilton1980reconstruction} introduced outline rectangles. It is simple to obtain outline rectangles from latin squares.

\begin{definition}
    Given partitions $P,Q,R$ of $n$, where $P = (p_1\dots p_u)$, $Q = (q_1\dots q_v)$ and $R = (r_1\dots r_t)$, and a latin square $L$ of order $n$, the \emph{reduction modulo $(P,Q,R)$} of $L$, denoted $O$, is the $u\times v$ array of multisets obtained by amalgamating rows $(p_1 + \dots + p_{i-1}) + [p_i]$ for all $i\in[u]$, columns $(q_1 + \dots + q_{j-1}) + [q_j]$ for all $j\in[v]$, and symbols $(r_1 + \dots + r_{\ell-1}) + [r_\ell]$ for all $\ell\in[t]$.

    When amalgamating symbols, for $\ell\in[t]$ we map all symbols in $(r_1 + \dots + r_{\ell-1}) + [r_\ell]$ to symbol $\ell$.
\end{definition}

The outline square in \Cref{fig: outline rectangle example} is a reduction modulo $(P,P,P)$, for $P = (3^1 2^1 1^3)$, of the latin square given in \Cref{fig: ILS example}.

It is clear that if an array $O$ is a reduction modulo $(P,Q,R)$ of a latin square, then $O$ is an outline rectangle associated to $(P,Q,R)$. If we instead start with an outline rectangle $O$, and there exists a latin square $L$ such that $O$ is a reduction modulo $(P,Q,R)$ of $L$, then we say that $O$ \emph{lifts} to $L$. Hilton \cite{hilton1980reconstruction} has shown that this reverse operation is possible for all outline rectangles.

\begin{theorem}[Theorem 3 of \cite{hilton1980reconstruction}]
\label{thm: outline rectangle to square}
    Let $P,Q,R$ be partitions of $n$. For every outline rectangle $O$ associated to $(P,Q,R)$, there is a latin square $L$ of order $n$ such that $O$ lifts to $L$.
\end{theorem}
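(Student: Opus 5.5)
Hilton's theorem is proved by splitting one block at a time, so I will induct on the amount of amalgamation. Write $\rho(P)=n-u$, and similarly for $Q$ and $R$, and induct on $\rho(P)+\rho(Q)+\rho(R)$. If this is $0$, then $P=Q=R=(1^n)$. Each cell of $O$ then holds exactly one symbol, and conditions (2) and (3) say every symbol occurs exactly once in each row and column. So $O$ is already a latin square of order $n$ and lifts to itself.

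For the inductive step, assume some part exceeds $1$. By the symmetry between rows, columns and symbols (conjugating an outline rectangle gives an outline rectangle, and lifting commutes with conjugation), we may assume it is a row part, say $p_1 = a+b$ with $a,b\ge1$. We build an outline rectangle $O'$ associated to $(P',Q,R)$, where $P'=(a,b,p_2,\dots,p_u)$, such that merging the first two rows of $O'$ gives back $O$. Lifting $O'$ by induction then gives a latin square $L$. Merging rows of $L$ gives $O'$, and merging further gives $O$, so $L$ lifts $O$ as required.

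To construct $O'$ we split row $1$ of $O$ into an upper row of weight $a$ and a lower row of weight $b$. Let $m_{j\ell}$ be the multiplicity of symbol $\ell$ in $O(1,j)$. We need nonnegative integers $x_{j\ell}\le m_{j\ell}$ (the upper part) satisfying:
\begin{itemize}
\item $\sum_\ell x_{j\ell}=a q_j$ for each column $j$;
\item $\sum_j x_{j\ell}=a r_\ell$ for each symbol $\ell$.
\end{itemize}
The lower part $m_{j\ell}-x_{j\ell}$ then automatically satisfies the same constraints with $b$ in place of $a$, since $\sum_\ell m_{j\ell}=p_1q_j$ and $\sum_j m_{j\ell}=p_1 r_\ell$. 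Column condition (3) is unaffected by the split.

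The existence of such an $x$ is a bipartite transportation problem with capacities. A fractional solution is $x_{j\ell}=\tfrac{a}{p_1}m_{j\ell}$, which meets every constraint exactly. Integrality of bipartite flow polytopes (total unimodularity, or the integral flow theorem on a source--column--symbol--sink network) then gives an integral solution. Equivalently, we can round the fractional matrix by Baranyai's rounding lemma, choosing each entry as the floor or ceiling of $\tfrac{a}{p_1}m_{j\ell}$ while keeping the row and column sums $aq_j$ and $ar_\ell$, which are integers.

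The main obstacle is this splitting step: guaranteeing that an integral split exists while preserving all row and column multiplicities simultaneously. The flow and integrality argument above handles it. The remaining bookkeeping is routine: checking that the reduction of $L$ is compatible with the merges, and that conjugation preserves outline rectangles.
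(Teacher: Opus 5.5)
Your proof is correct: splitting a row block means finding an integral point of a capacitated transportation polytope that contains the fractional point $x_{j\ell}=\tfrac{a}{p_1}m_{j\ell}$. The reduction to row parts by conjugation is legitimate, since the three outline conditions are symmetric under permuting rows, columns and symbols together with $(P,Q,R)$. Repeated splitting ends at $(1^n)$, where an outline rectangle is already a latin square.

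The paper does not prove this itself; it simply cites Hilton. As far as I recall, Hilton's argument is this same block-by-block disentangling. The difference is that he obtains the splitting step from an equitable edge-colouring of the bipartite multigraph joining column blocks to symbol blocks, rather than from total unimodularity or Baranyai's rounding lemma.
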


By fixing certain multisets to be the subsquares, we can force an outline square to lift to a latin square with disjoint subsquares.

\begin{lemma}
\label{lemma: ILS from outline}
    For a partition $P = (h_1\dots h_{k+1})$ of $n$, an $\ILS(n;h_1\dots h_k)$ exists if and only if there exists an outline square $O$ associated to $P$ with $O_i(i,i) = h_i^2$ for all $i\in[k]$.
\end{lemma}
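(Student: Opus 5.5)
The notation $O_i(i,i)=h_i^2$ means that symbol $i$ occurs $h_i^2$ times in cell $(i,i)$. Since $|O(i,i)|=h_i^2$, this says cell $(i,i)$ consists entirely of copies of symbol $i$. Here $h_{k+1}=n-\sum_{i=1}^k h_i$; if this is zero we simply drop the last part. The idea is that a subsquare in the $i$-th diagonal block using the $i$-th symbol block is exactly what this condition encodes after reduction, and \Cref{thm: outline rectangle to square} lets us reverse the reduction.

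\emph{Necessity.} Suppose an $\ILS(n;h_1\dots h_k)$ exists, taken in normal form. Then the $i$-th subsquare occupies rows and columns $(h_1+\dots+h_{i-1})+[h_i]$ and uses symbols $(h_1+\dots+h_{i-1})+[h_i]$. This holds because the subsquares appear in order along the diagonal and their symbol sets are increasing, disjoint and of sizes $h_1,\dots,h_k$. If the symbol sets are not initially consecutive blocks, we first apply a symbol permutation to make them so; this preserves the subsquares. Let $O$ be the reduction modulo $(P,P,P)$. As remarked after the definition, $O$ is an outline square associated to $P$. The $h_i\times h_i$ block at $(i,i)$ contains only symbols from the $i$-th symbol block, all of which map to $i$, so $O(i,i)$ is $h_i^2$ copies of $i$.

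\emph{Sufficiency.} Let $O$ be an outline square associated to $P$ with $O_i(i,i)=h_i^2$ for $i\in[k]$. By \Cref{thm: outline rectangle to square}, $O$ lifts to a latin square $L$ of order $n$. For $i\in[k]$, the $h_i\times h_i$ subarray of $L$ in row block $i$ and column block $i$ reduces to cell $(i,i)$ of $O$. Hence each of its entries lies in the symbol block $S_i=(h_1+\dots+h_{i-1})+[h_i]$, a set of size $h_i$. Each of its $h_i$ rows contains $h_i$ distinct symbols from $S_i$, so each row contains every symbol of $S_i$ exactly once, and likewise for columns. So this subarray is a latin subsquare of order $h_i$ on $S_i$. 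The row blocks, column blocks and symbol blocks are pairwise disjoint for distinct $i$, so these $k$ subsquares are pairwise disjoint, and $L$ is an $\ILS(n;h_1\dots h_k)$.

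\emph{Main obstacle.} The argument is essentially bookkeeping once \Cref{thm: outline rectangle to square} is available. The only delicate point is in the necessity direction: making sure the normal form places the subsquares' symbol sets as the consecutive blocks matching $P$, and handling the degenerate case $h_{k+1}=0$. A symbol relabeling settles the first, and discarding the empty part settles the second.
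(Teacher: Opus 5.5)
Your proof is correct and takes the same route the paper intends. The paper gives no written proof of this lemma: it treats necessity as immediate from reducing a normal-form ILS modulo $(P,P,P)$, and sufficiency as immediate from Hilton's lifting theorem (\Cref{thm: outline rectangle to square}), with the diagonal cells forcing the subsquares; your symbol relabeling and your argument that each row and column of the lifted $(i,i)$ block is a permutation of $S_i$ fill in the bookkeeping the paper leaves implicit.
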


For $i,j,\ell\in[u]$, $O_\ell(i,j)$ denotes the number of copies of symbol $\ell$ in the multiset $O(i,j)$.

A \emph{rational outline square} is similar to an outline square, except that the number of copies of a symbol in a cell is a non-negative rational number.

\begin{definition}
    For a partition $P$ of $n$, where $P = (p_1\dots p_u)$, let $O_\ell(i,j)$ be a non-negative rational number for all $i,j,\ell\in[u]$.

    Then the values $O_\ell(i,j)$ form a \emph{rational outline square} associated to $P$ if
    \begin{enumerate}[label=\textup{(\arabic*)}]
        \item $\sum_{\ell\in[t]} O_\ell(i,j) = p_ip_j$, for all $i,j\in[u]$;
        \item $\sum_{j\in[u]} O_\ell(i,j) = p_ip_\ell$, for all $i,\ell\in[u]$;
        \item $\sum_{i\in[u]} O_\ell(i,j) = p_jp_\ell$, for all $j,\ell\in[u]$.
    \end{enumerate}
\end{definition}

Given a partition $P = (h_1\dots h_{k+1})$ and a set $S\subseteq[k+1]$, a (rational) outline square $O$ is said to \emph{respect} $(P,S)$ if $O$ is associated to $P$ and $O_i(i,i) = h_i^2$ for all $i\in S$.

We now add an extra property to outline squares.

\begin{definition}
    An outline square $O$, is \emph{symmetric} if $O_\ell(i,j) = O_c(a,b)$ for every permutation $(a,b,c)$ of $(i,j,\ell)$.
\end{definition}

For a symmetric outline square, the ordering of the arguments $i,j,\ell$ in $O_\ell(i,j)$ is not important, and so we use $O(i,j,\ell)$ to represent the value of $O_c(a,b)$, where $(a,b,c)$ is any permutation of $(i,j,\ell)$.

Let $\ROS(P,S)$ denote a symmetric rational outline square respecting $(P,S)$.

The other tool we use throughout the constructions are frequency arrays, which were introduced in \cite{kemp2025further} and are similar to outline squares.

\begin{definition}
    A \emph{frequency array} $F$ of order $k$ is a $k\times k$ array, where each cell contains a single non-negative integer.
\end{definition}

\begin{definition}
    Let $O$ be a $k\times k$ array of multisets. $O(i,j)$ denotes the multiset of symbols in cell $(i,j)$, $O_{\ell}^i$ and $^jO_\ell$ denote the number of copies of symbol $\ell$ in row $i$ and column $j$ respectively.
    Then $O$ is an \emph{outline array} corresponding to a frequency array $F$ of order $k$, if
    \begin{itemize}
        \item $|O(i,j)| = F(i,j)$,
        \item $O_\ell^i = F(i,\ell)$, and
        \item $^jO_\ell = F(\ell,j)$.
    \end{itemize}
\end{definition}

An outline square associated to $P = (h_1\dots h_k)$ is equivalent to an outline array for a frequency array of order $k$ with $F(i,j) = h_ih_j$.

We state two results here that help to construct outline arrays. The proofs of \Cref{lemma: sum freq arrays,lemma: summing rows/columns of outline array} can be found in \cite{kemp2025further}.

\begin{lemma}[Lemma 40 of \cite{kemp2025further}]
\label{lemma: sum freq arrays}
    If $O_1$ and $O_2$ are outline arrays corresponding to the frequency arrays $F_1$ and $F_2$ respectively, then there exists an outline array $O^*$ corresponding to the frequency array $F^*$ where $F^*(i,j) = F_1(i,j) + F_2(i,j)$.
\end{lemma}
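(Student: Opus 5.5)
The plan is to take the union of the two multiset arrays cellwise. Let $O_1$ and $O_2$ be outline arrays corresponding to $F_1$ and $F_2$, both of order $k$. Define the $k\times k$ array of multisets $O^*$ by $O^*(i,j) = O_1(i,j)\uplus O_2(i,j)$, where $\uplus$ denotes multiset sum, so that multiplicities add. Then I check the three defining conditions of an outline array for $F^*$ in turn. Each one follows because the relevant count is additive under multiset sum.

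\emph{Cell sizes.} $|O^*(i,j)| = |O_1(i,j)| + |O_2(i,j)| = F_1(i,j)+F_2(i,j) = F^*(i,j)$.

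\emph{Row counts.} The number of copies of symbol $\ell$ in row $i$ of $O^*$ is the sum over $j$ of the multiplicity of $\ell$ in $O^*(i,j)$. This equals the row-$i$ count of $\ell$ in $O_1$ plus the row-$i$ count of $\ell$ in $O_2$, namely $F_1(i,\ell)+F_2(i,\ell)=F^*(i,\ell)$.

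\emph{Column counts.} In the same way, the column-$j$ count of $\ell$ in $O^*$ is $F_1(\ell,j)+F_2(\ell,j)=F^*(\ell,j)$.

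The only point needing care is that both arrays use the same symbol set $[k]$ and the same indexing, which holds since both have order $k$. I expect no real obstacle: the argument is a direct linearity check, so $O^*$ is the required outline array.
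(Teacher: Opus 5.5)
Your proposal is correct and is essentially the intended argument. The paper defers this proof to \cite{kemp2025further}, but wherever it applies the lemma it does so by taking the cell-wise union of outline arrays, which is exactly your cellwise multiset sum; the three defining conditions (cell sizes, row counts, column counts) are all additive, as you check.
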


Observe that an outline square respecting $(P,[k])$, where $P = (h_1\dots h_{k+1})$, is equivalent to the sum of outline arrays $O_1$ and $O_2$, corresponding to $F_1$ and $F_2$, where $F_1(i,j) = h_ih_j$ for all $i\neq j$ or $i=j=k+1$, $F_1(i,i) = 0$ for $i\in[k]$, $F_2(i,j) = 0$ when $i\neq j$ or $i=j=k+1$ and $F_2(i,i) = h_i^2$ for $i\in[k]$.

\begin{lemma}[Lemma 41 of \cite{kemp2025further}]
\label{lemma: summing rows/columns of outline array}
    If an outline array $O$ exists for an order $k$ frequency array $F$, then for any partition $S_1,S_2,\dots,S_{k'}$ of $[k]$, an outline array $O^*$ exists for the order $k'$ array $F^*$, where for all $i,j\in[k']$ $$F^*(i,j) = \sum_{x\in S_i}\sum_{y\in S_j}F(x,y).$$
\end{lemma}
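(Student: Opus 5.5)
The plan is to build $O^*$ directly from $O$ by amalgamation, exactly as in the reduction modulo $(P,Q,R)$ of a latin square. Merge all rows of $O$ indexed by $S_i$ into a single row $i$, and all columns indexed by $S_j$ into a single column $j$. Relabel symbols by sending every symbol $m\in S_\ell$ to the new symbol $\ell$. Concretely, for $i,j\in[k']$, I would define $O^*(i,j)$ to be the multiset union
$$O^*(i,j)=\biguplus_{x\in S_i}\biguplus_{y\in S_j}\phi\bigl(O(x,y)\bigr),$$
where $\phi$ replaces each symbol $m$ by the unique $\ell$ with $m\in S_\ell$. Since $S_1,\dots,S_{k'}$ partition $[k]$, $\phi$ is well defined and every cell and every symbol of $O$ is used exactly once.

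Next I would verify the three defining conditions of an outline array for $F^*$, each of which is a direct double sum.

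\emph{Cell sizes.} We have $|O^*(i,j)|=\sum_{x\in S_i}\sum_{y\in S_j}|O(x,y)|=\sum_{x\in S_i}\sum_{y\in S_j}F(x,y)=F^*(i,j)$.

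\emph{Row counts.} The number of copies of $\ell$ in row $i$ of $O^*$ is the number of symbols of $S_\ell$ appearing in the rows $S_i$ of $O$. Hence
$$(O^*)^i_\ell=\sum_{x\in S_i}\sum_{m\in S_\ell}O^x_m=\sum_{x\in S_i}\sum_{m\in S_\ell}F(x,m)=F^*(i,\ell).$$

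\emph{Column counts.} Similarly,
$${}^j(O^*)_\ell=\sum_{y\in S_j}\sum_{m\in S_\ell}{}^yO_m=\sum_{y\in S_j}\sum_{m\in S_\ell}F(m,y)=\sum_{m\in S_\ell}\sum_{y\in S_j}F(m,y)=F^*(\ell,j).$$

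There is no real obstacle here; the argument is pure bookkeeping. The only point needing care is the order of indices in the column condition. In an outline array, ${}^jO_\ell=F(\ell,j)$ has the symbol index first. I must check that after swapping the order of summation one obtains $F^*(\ell,j)$ rather than $F^*(j,\ell)$, and the computation above shows it does. Nonnegativity and integrality of all entries are inherited from $O$, so $O^*$ is an outline array corresponding to $F^*$.
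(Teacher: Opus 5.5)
Your proposal is correct and takes essentially the intended approach. You amalgamate the rows and columns indexed by each $S_i$ and relabel symbols through the same partition, exactly as in a reduction modulo $(P,Q,R)$; the paper itself only cites \cite{kemp2025further} for the proof. Your three double-sum checks are all right, including the index order that gives $F^*(\ell,j)$ in the column condition, and the construction also works verbatim when some $S_i$ is empty, which is how the lemma is applied in \Cref{lemma: ILS L thing} when some $A_\ell(i)=0$.
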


\section{Necessary conditions for ILS}
\label{sec: necessary cond}

Since the orders of the $k$ subsquares do not necessarily partition $n$, we will represent the orders as a partition $P = (h_1\dots h_kh_{k+1})$ of $n$ where $h_{k+1} = n-\sum_{i=1}^kh_i$. Using a partition of $n$ allows us to easily reuse many of the same methods that have been used for realizations.

The best known necessary condition for existence of a realization is given in \cite{kemp2025further}. That result relies on rational outline squares for realizations and we prove corresponding results here for incomplete latin squares.

\begin{lemma}
\label{lemma: always symmetric solution ILS}
    If an $\ILS(n;h_1\dots h_k)$ exists, then an $\ROS(h_1\dots h_{k+1},[k])$ exists.
\end{lemma}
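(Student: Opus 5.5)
Starting from an $\ILS(n;h_1\dots h_k)$ $L$ in normal form, we first apply \Cref{lemma: ILS from outline}, or equivalently take the reduction of $L$ modulo $(P,P,P)$ with $P=(h_1\dots h_{k+1})$. This gives an integer outline square $O$ associated to $P$ with $O_i(i,i)=h_i^2$ for all $i\in[k]$. Such an $O$ is a rational outline square respecting $(P,[k])$, but in general it is not symmetric. We will symmetrize it by averaging over the six conjugates.

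The key observation is that conjugation commutes with reduction, or can be defined directly on outline squares. For a permutation $\sigma$ of the three coordinates (row, column, symbol), define $O^\sigma$ by permuting the roles of $i,j,\ell$ in $O_\ell(i,j)$. For example, the $(1\,3)$ conjugate is $O'_\ell(i,j)=O_i(\ell,j)$. We check that $O^\sigma$ is again an outline square associated to $P$. Conditions (1), (2) and (3) of the definition are exactly the three ``line sum'' conditions obtained by fixing two of the three coordinates and summing over the third, and each has right-hand side $p_ap_b$ for the two fixed indices. This family of conditions is invariant under permuting coordinates, so each conjugate satisfies them because $P=Q=R$. (Alternatively, take the conjugate latin square of $L$ and reduce it.) The condition $O_i(i,i)=h_i^2$ involves the diagonal triple $(i,i,i)$, which every permutation fixes, so each conjugate still respects $(P,[k])$.

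Then we set
\[
\bar O_\ell(i,j)=\frac16\sum_{\sigma\in S_3} O^\sigma_\ell(i,j).
\]
All entries are non-negative rationals. The defining conditions (1)--(3) are linear equations with the same right-hand side for every conjugate, so they are preserved under convex combinations. The diagonal condition is preserved in the same way, since every term equals $h_i^2$. Finally, $\bar O$ is invariant under every coordinate permutation, because averaging over the full group $S_3$ makes $\bar O_\ell(i,j)$ a function of the multiset $\{i,j,\ell\}$ as an ordered triple up to permutation. That is exactly the symmetry condition, so $\bar O$ is an $\ROS(h_1\dots h_{k+1},[k])$.

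The only step needing care is verifying that the conjugates satisfy the outline square axioms with the correct right-hand sides. This is a bookkeeping check that each of the three sum conditions maps to another under any transposition of coordinates. The rest is routine linearity.
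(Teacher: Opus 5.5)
Your proposal is correct and is essentially the paper's proof: the paper reduces the ILS modulo $P$ and defines $X(i,j,\ell)$ as exactly the average of the six conjugate values $O_\ell(i,j),O_\ell(j,i),O_i(j,\ell),\dots$, then checks the diagonal value $h_i^2$ and the line sums. Your phrasing via conjugate outline squares, where the three line-sum conditions are permuted among themselves, verifies all three conditions at once. The paper only writes out the sum over $\ell$ and leaves the other two to symmetry.
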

\begin{proof}
    We assume that the subsquares are along the main diagonal of the incomplete latin square such that for all $\ell\in[k]$ and all $i,j\in \sum_{m=1}^{\ell-1}h_m+[h_\ell]$, the entry in cell $(i,j)$ is also in $\sum_{m=1}^{\ell-1}h_m+[h_\ell]$. Obtain an outline square $O$ which respects $(P,[k])$ for $P = (h_1\dots h_{k+1})$ by reducing an $\ILS(n;h_1\dots\linebreak[1] h_k)$ modulo $P$. For any multiset $i,j,\ell\in[k+1]$, let
    $$X(i,j,\ell) = \frac{1}{6}\big(O_\ell(i,j) + O_\ell(j,i) + O_i(j,\ell) + O_i(\ell,j) + O_j(i,\ell) + O_j(\ell,i)\big).$$

    Clearly, $X(i,i,i) = \frac{1}{6}(6\cdot O_i(i,i)) = h_i^2$ for any $i\in[k]$ as required to respect the partition, and by fixing any $i,j\in[k+1]$, we have that
    \begin{align*}
        \begin{split}
        \sum_{\ell\in[k+1]} X(i,j,\ell) &= \frac{1}{6}\bigg( \sum_{\ell=1}^{k+1} O_\ell(i,j) + \sum_{\ell=1}^{k+1} O_\ell(j,i) + \sum_{\ell=1}^{k+1} O_i(j,\ell) + \sum_{\ell=1}^{k+1} O_i(\ell,j) \\
        &\qquad +\sum_{\ell=1}^{k+1} O_j(i,\ell) + \sum_{\ell=1}^{k+1} O_j(\ell,i) \bigg)\\
        &= \frac{1}{6}(6h_ih_j) = h_ih_j.
        \end{split}
    \end{align*}

    Thus, $X = \{X(i,j,\ell)\mid \{i,j,\ell\} \text{ is a multiset of } [k+1]\}$ forms an $\ROS(h_1\dots h_k,[k])$.
\end{proof}

There are no known necessary conditions for ILS with an arbitrary number of subsquares. The following result gives a similar condition to that given for realizations.

\begin{theorem}
\label{cond: sufiĉa? ILS}
    If an $\ILS(n;h_1\dots h_k)$ exists, then
    $$\left(\sum_{i\in A\cup C}h_i\right)^2 + \left(\sum_{i\in B\cup D}h_i\right)^2 - \sum_{i\in E\setminus\{k+1\}}h_i^2 \geq \left(\sum_{i\in A\cup D}h_i\right)\left(\sum_{i\in B\cup C}h_i - \sum_{j\in \overline{E}}h_j\right),$$
    where $A$, $B$, $C$ and $D$ are pairwise disjoint subsets of $[k+1]$, $E = A\cup B\cup C\cup D$ and $\overline{E} = [k+1]\setminus E$.
\end{theorem}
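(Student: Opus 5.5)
The plan is to pass to the symmetric rational outline square from \Cref{lemma: always symmetric solution ILS}, rewrite both sides of the inequality as sums of its entries, and show that their difference is a sum of entries, which are non-negative. Suppose an $\ILS(n;h_1\dots h_k)$ exists and let $P=(h_1\dots h_{k+1})$. By \Cref{lemma: always symmetric solution ILS} there is an $\ROS(P,[k])$, call it $X$. Besides $X(i,i,i)=h_i^2$ for $i\in[k]$, I will record the vanishing consequences of the subsquare structure. For $i\in[k]$ and $\ell\neq i$, the cell $(i,i)$ of the reduced outline square contains only symbol $i$, so $O_\ell(i,i)=0$. Also symbol $i$ appears $h_i^2$ times in row block $i$ and all of these copies lie in cell $(i,i)$, so $O_i(i,\ell)=O_i(\ell,i)=0$. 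Averaging shows $X(i,i,\ell)=0$ whenever $i\in[k]$ and $\ell\neq i$. Hence for $i\le k$ the only non-zero entry with a repeated index $i$ is $X(i,i,i)$. The index $k+1$ carries no such constraint, which is why it is excluded from the subtracted sum $\sum_{i\in E\setminus\{k+1\}}h_i^2$.

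For sets $S,T,U\subseteq[k+1]$ I will write $X(S,T,U)=\sum_{s\in S,t\in T,u\in U}X(s,t,u)$ and $h(S)=\sum_{i\in S}h_i$. Condition (1), together with symmetry, gives the margin identity $X(S,T,[k+1])=h(S)h(T)$. Splitting $[k+1]=A\cup B\cup C\cup D\cup\overline{E}$, every term of the inequality is a marginal of this kind:
\begin{align*}
h(A\cup C)^2&=X(A\cup C,\,A\cup C,\,[k+1]),\\
h(B\cup D)^2&=X(B\cup D,\,B\cup D,\,[k+1]),\\
h(A\cup D)\,h(B\cup C)&=X(A\cup D,\,B\cup C,\,[k+1]),\\
h(A\cup D)\,h(\overline{E})&=X(A\cup D,\,\overline{E},\,[k+1]).
\end{align*}
Expanding each as a sum over unordered triples of blocks from $\{A,B,C,D,\overline{E}\}$ turns $\mathrm{LHS}-\mathrm{RHS}$ into a linear combination of block sums $X(\cdot,\cdot,\cdot)$ with integer coefficients. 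The term $-\sum_{i\in E\setminus\{k+1\}}h_i^2$ is there to cancel the diagonal contributions $X(i,i,i)$ inside the squared terms.

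The heart of the proof, and the step I expect to be the main obstacle, is the bookkeeping. I must show that after cancellation every triple type carries a coefficient $\ge 0$, except those types that are zero by the vanishing facts above. These are the triples with a repeated index $i\le k$ that are not of the form $(i,i,i)$, and the diagonal triples already removed by the subtraction. The positive squared terms count each triple $\{x,y,z\}$ with multiplicity depending on how many of $x,y,z$ lie in $A\cup C$ or in $B\cup D$, while the negative term counts pairs split between $A\cup D$ and $B\cup C$. Adding the $\overline{E}$ term effectively moves triples involving $\overline{E}$ to the positive side. I would tabulate the multiplicities by the block pattern of each triple: first triples inside $E$ (types like $AAB$, $ABC$, $ACD$, and so on), then triples with one or two indices in $\overline{E}$. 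For a few dangerous patterns such as $\{a,b,d\}$ with $a\in A$, $b\in B$, $d\in D$, I expect coefficients to cancel exactly rather than be strictly positive. In those cases I will use symmetry of $X$ to match them against positive counts of permuted patterns. If a pattern with a single repeated index such as $\{a,a,b\}$ ends up with a negative coefficient, it is killed by $X(a,a,b)=0$ when $a\le k$. When the repeated index is $k+1$ I will need to check that its coefficient is non-negative, since no vanishing is available there; this case is the reason I expect the proof to need care. Once every coefficient is controlled, non-negativity of $X$ gives the inequality.
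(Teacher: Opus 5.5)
Your proposal is correct and is essentially the paper's argument: the paper defers to the proof of Theorem~20 of its earlier realization paper, run on the symmetric rational outline square from \Cref{lemma: always symmetric solution ILS} with the diagonal of block $k+1$ left unconstrained, and your marginal expansion is that argument written out explicitly. The bookkeeping is easier than you expect, because every block coefficient of $\mathrm{LHS}-\mathrm{RHS}$ is already non-negative: the patterns $AAB$, $ABB$, $CCD$, $CDD$, $ABC$, $ABD$, $ACD$, $BCD$, $AB\overline{E}$ and $CD\overline{E}$ cancel exactly, $X(P,P,P)$ has coefficient $1$ for each $P\in\{A,B,C,D\}$, and all other coefficients lie in $\{0,1,2\}$, so you only need $X\ge 0$ and $X(i,i,i)=h_i^2$ for $i\in[k]$, with no vanishing facts and no special treatment of index $k+1$.
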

\begin{proof}
    Using \Cref{lemma: always symmetric solution ILS}, follow the same argument as in the proof of Theorem 20 from \cite{kemp2025further}. It is stated there that for a set $X\in\{A,B,C,D\}$, the number of symbols from $X$ in the rows or columns of $X$ is at most $(\sum_{i\in X} h_i)^2-\sum_{i\in X}h_i^2$ due to the subsquares. This holds unless $k+1\in X$, where the number of symbols is at most $(\sum_{i\in X} h_i)^2-\sum_{i\in X\setminus\{k+1\}}h_i^2$. The rest of the proof is unchanged and this provides the required inequality.
\end{proof}

\section[Constructions for small cases]{Constructions for small cases}
\label{sec: small cases}

In the previous sections we showed that outline squares can be used to determine the existence of incomplete latin squares. As with realizations, finding such an outline square is done by finding values for each variable $O_\ell(i,j)$ where $i,j,\ell\in[k+1]$. This is simple when $k$ is very small. When $k=1$, an incomplete latin square is equivalent to a latin square of order $n$ with a single subsquare of order $h_1$. We know from \Cref{thm: single subsquare} that this exists if and only if $h_1\leq \frac{n}{2}$ (or trivially $h_1=n$). Thus, we consider $k=2$ and $k=3$.

\begin{theorem}
\label{thm: ILS(n;h1h2)}
    For any partition $(h_1h_2h_3)$ of $n$ where $h_1\geq h_2>0$, there exists an $\ILS(n;h_1h_2)$ if and only if $h_3\geq h_1$.
\end{theorem}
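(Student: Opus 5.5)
The plan is to prove the two directions separately. Necessity comes from a direct counting argument on the symbols of the first subsquare. Sufficiency comes from writing down an explicit outline square associated to $P=(h_1h_2h_3)$ with $O_1(1,1)=h_1^2$ and $O_2(2,2)=h_2^2$, and then invoking \Cref{lemma: ILS from outline}.

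\textbf{Necessity.} Take an $\ILS(n;h_1h_2)$ in normal form, and split its rows, columns and symbols into blocks $1,2,3$ of sizes $h_1,h_2,h_3$. Fix a column $c$ in column block $2$.
\begin{itemize}
\item In row block $2$, column $c$ passes through the second subsquare, so those cells contain only block-$2$ symbols.
\item A row of row block $1$ already contains every block-$1$ symbol inside the first subsquare, so it cannot contain a block-$1$ symbol in column $c$.
\end{itemize}
Hence all $h_1$ symbols of block $1$ must appear in column $c$ within the $h_3$ rows of row block $3$, which forces $h_1\le h_3$. (If $h_3=0$ this is a realization with two subsquares, which does not exist by \Cref{thm: small k squares}; this is consistent with the claim.)

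\textbf{Sufficiency.} Assume $h_3\ge h_1\ge h_2$. The same reasoning as above dictates most of the outline square, and I will fill in the remaining entries by the row, column and symbol counts.
\begin{itemize}
\item Cell $(1,2)$ can contain neither block-$1$ nor block-$2$ symbols, so $O_3(1,2)=h_1h_2$, and symmetrically $O_3(2,1)=h_1h_2$.
\item Row block $1$ needs $h_1h_2$ copies of symbol $2$, and these can only go in cell $(1,3)$. The remaining capacity of that cell is filled with symbol $3$, giving $O_2(1,3)=h_1h_2$ and $O_3(1,3)=h_1(h_3-h_2)$. The same values hold for cell $(3,1)$.
\item Likewise $O_1(2,3)=O_1(3,2)=h_1h_2$ and $O_3(2,3)=O_3(3,2)=h_2(h_3-h_1)$.
\item Cell $(3,3)$ is then forced: $O_1(3,3)=h_1(h_3-h_2)$, $O_2(3,3)=h_2(h_3-h_1)$, and
$$O_3(3,3)=h_3^2-h_1(h_3-h_2)-h_2(h_3-h_1)=(h_3-h_1)(h_3-h_2)+h_1h_2 .$$
\end{itemize}
I would then check routinely that every cell size is $h_ih_j$ and every row and column contains symbol $\ell$ exactly $h_ih_\ell$ (respectively $h_jh_\ell$) times. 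The only nontrivial point is nonnegativity of all entries, which holds exactly because $h_3\ge h_1\ge h_2$.

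The resulting outline square respects $(P,\{1,2\})$, so \Cref{lemma: ILS from outline} (built on Hilton's \Cref{thm: outline rectangle to square}) yields the $\ILS(n;h_1h_2)$. I expect the main obstacle to be bookkeeping rather than any conceptual difficulty. Specifically, one must verify the row and column symbol counts for block $3$, where symbol $3$ is split across three cells. This is the one place where the identity for $O_3(3,3)$ has to be confirmed to sum correctly.
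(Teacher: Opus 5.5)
Your proof is correct and follows the paper's approach: the outline square you derive is exactly the one in the paper's figure, including $O_3(3,3)=(h_3-h_1)(h_3-h_2)+h_1h_2$, and sufficiency goes through the outline-square lemma and Hilton's theorem just as in the paper. For necessity, you count the block-$1$ symbols in a column of block $2$ directly in the latin square, while the paper counts the block-$3$ symbols forced into cell $(1,2)$ of the outline square; this is the same counting argument.
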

\begin{proof}
By \Cref{lemma: ILS from outline}, we need only find an outline square $O$ associated to $P = (h_1h_2h_3)$ with $O_1(1,1) = h_1^2$ and $O_2(2,2) = h_2^2$. This forces $O_1(1,2) = O_2(1,2) = 0$, and since $\sum_{\ell=1}^3O_\ell(1,2) = h_1h_2$, it is clear that $O_3(1,2) = h_1h_2$. Since $\sum_{\ell=1}^3O_3(\ell,2) = h_3h_2$, it follows that $h_1\leq h_3$.

Consider the outline square as shown in \Cref{fig: ILS(h1h2) outline}, where $x:y$ in cell $(i,j)$ represents $O_x(i,j) = y$.

    \begin{figure}[H]
        \centering
        \renewcommand{\arraystretch}{2.7}
        \begin{tabular}{|l|l|l|}\hline
        $1:h_1^2$ & $3:h_1h_2$ & \thead{$2:h_1h_2$\\$3:h_1h_3 - h_1h_2$} \\\hline
        $3: h_1h_2$ & $2:h_2^2$ & \thead{$1:h_1h_2$\\$3:h_2h_3-h_1h_2$} \\\hline
        \thead{$2:h_1h_2$\\$3:h_1h_3 - h_1h_2$} & \thead{$1:h_1h_2$\\$3:h_2h_3-h_1h_2$} & \thead{$1:h_1h_3-h_1h_2$\\$2:h_2h_3-h_1h_2$\\$3:h_3^2 - h_1h_3 - h_2h_3 +2h_1h_2$} \\\hline
        \end{tabular}
        \caption{The outline square for an $\ILS(n;h_1h_2)$}
        \label{fig: ILS(h1h2) outline}
    \end{figure}

The above array is a valid outline square when all values are non-negative. Since $h^3 - h_1h_3 - h_2h_3 + 2h_1h2 = (h_3-h_1)(h_3-h_2)+h_1h_2$, this is true when $h_3\geq h_1\geq h_2$.
    
Therefore, the outline square is valid when $h_3\geq h_1$.
\end{proof}

\begin{theorem}
\label{thm: ILS(n;h1h2h3)}
    For any partition $(h_1h_2h_3h_4)$ of $n$ where $h_1\geq h_2\geq h_3$, an $\ILS(n;h_1h_2h_3)$
    \begin{itemize}
        \item exists when $h_4\geq h_1$;
        \item exists when $h_1>h_4\geq h_3$ if and only if $h_4\geq h_1-h_3$; and
        \item exists when $h_4<h_3$ if and only if $h_4\geq h_2+h_3-2\frac{h_2h_3}{h_1}$ and $h_4^2+h_4(2h_1-h_2-h_3)-h_1h_2-h_1h_3+2h_2h_3\geq 0$.
    \end{itemize}
\end{theorem}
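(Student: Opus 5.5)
We follow \Cref{thm: ILS(n;h1h2)}. By \Cref{lemma: ILS from outline}, an $\ILS(n;h_1h_2h_3)$ exists if and only if there is an outline square $O$ associated to $P=(h_1h_2h_3h_4)$ with $O_i(i,i)=h_i^2$ for $i\in[3]$. For necessity we pass, via \Cref{lemma: always symmetric solution ILS}, to an $\ROS(P,[3])$, which has far fewer variables. For sufficiency we write down explicit integral (not necessarily symmetric) outline squares in each case.

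\textbf{Parametrising the symmetric square.} For $i\in[3]$, symbol $i$ occurs $h_i^2$ times in row $i$, and all of these lie in cell $(i,i)$. Hence $O(i,i,j)=0$ for every $j\neq i$. The only remaining unknowns are
\begin{itemize}
\item $t=O(1,2,3)$;
\item $x_{ij}=O(i,j,4)$ for $\{i,j\}\subseteq[3]$;
\item $y_i=O(i,4,4)$ for $i\in[3]$;
\item $z=O(4,4,4)$.
\end{itemize}
The cell sums force
\begin{align*}
x_{ij}&=h_ih_j-t,\\
y_i&=h_ih_4-h_i(h_j+h_k)+2t \quad (\{i,j,k\}=[3]),\\
z&=h_4^2-h_4(h_1+h_2+h_3)+2(h_1h_2+h_1h_3+h_2h_3)-6t.
\end{align*}
The remaining line conditions are then automatic. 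So an $\ROS(P,[3])$ exists if and only if some rational $t\geq 0$ satisfies all of the following:
\begin{itemize}
\item $t\leq h_2h_3$, which is the binding bound among the $x_{ij}\geq 0$;
\item $t\geq \tfrac12 h_i(h_j+h_k-h_4)$ for each $i$, from $y_i\geq 0$;
\item $6t\leq h_4^2-h_4(h_1+h_2+h_3)+2(h_1h_2+h_1h_3+h_2h_3)$, from $z\geq 0$.
\end{itemize}

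\textbf{Necessity.} The necessary conditions are exactly the pairwise comparisons of each lower bound with each upper bound. We expect the following pairs to be the ones that bind.
\begin{itemize}
\item The $y_1$ bound against $t\leq h_2h_3$ gives $h_1(h_2+h_3-h_4)\leq 2h_2h_3$, that is, $h_4\geq h_2+h_3-2h_2h_3/h_1$.
\item The $y_1$ bound against the $z$ bound gives $3h_1(h_2+h_3-h_4)\leq h_4^2-h_4(h_1+h_2+h_3)+2(h_1h_2+h_1h_3+h_2h_3)$. This simplifies to $h_4^2+h_4(2h_1-h_2-h_3)-h_1h_2-h_1h_3+2h_2h_3\geq 0$, the second inequality in the third case.
\item The bounds involving $y_2,y_3$ should be dominated by these, since $h_1\geq h_2\geq h_3$.
\end{itemize}
For the second case, $h_1>h_4\geq h_3$, we need to show that these two inequalities reduce to $h_4\geq h_1-h_3$. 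Factoring the quadratic should do this: the second inequality evaluated at $h_4=h_1-h_3$ is $(h_1-h_2)(h_1-2h_3)+\dots$, and we check its sign in that range. The first inequality is then implied. For $h_4\geq h_1$ both inequalities should hold automatically, giving the first case.

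\textbf{Sufficiency.} In each case we pick a concrete $t$: namely $t=\max\{0,\lceil \tfrac12 h_1(h_2+h_3-h_4)\rceil\}$, or $t=h_2h_3$ when that is the feasible endpoint. The issue is integrality, since $\tfrac12h_1(h_2+h_3-h_4)$ may be a half-integer. We handle this by dropping symmetry. The six entries $O_3(1,2),O_3(2,1),O_1(2,3),\dots$ need not all equal $t$. We can take, say, three of them equal to $\lfloor t\rfloor$ and three equal to $\lceil t\rceil$, arranged so that the corresponding $x$- and $y$-entries in each affected row and column still sum correctly. We then fill a $4\times4$ table as in \Cref{fig: ILS(h1h2) outline} and verify non-negativity using the case hypotheses.

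\textbf{Main obstacle.} The hardest step is this rounding, together with checking that the resulting integer entries of the $(4,4)$ cell stay non-negative at the boundary of the third case. The fallback is to write $O$ as a sum of two outline arrays via \Cref{lemma: sum freq arrays}: the symmetric integral part, plus a small correction array handling the parity defect.
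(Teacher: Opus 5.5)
Your parametrisation of the symmetric square by $t$ is correct. Comparing the lower and upper bounds on $t$ is a valid, more direct substitute for the paper's appeal to its general necessary condition, which is itself derived from \Cref{lemma: always symmetric solution ILS}. Your rounding idea is also what the paper does. Its outline square puts one value in $O_3(1,2),O_1(2,3),O_2(3,1)$ and another in $O_3(2,1),O_1(3,2),O_2(1,3)$, so each row, column and transposed pair of these six cells sees one of each. You should state this $3$-cycle arrangement explicitly. Once you do, cell $(4,4)$ causes no extra difficulty: apart from $0\le\lfloor t\rfloor\le\lceil t\rceil\le h_2h_3$, every constraint involves only such pairs or all six entries.

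The genuine gap is your claim that the bounds from $y_2$ and $y_3$ are dominated by the one from $y_1$. With $D_i=h_i(h_j+h_k-h_4)$,
\[
D_1-D_2=(h_1-h_2)(h_3-h_4),\qquad D_2-D_3=(h_2-h_3)(h_1-h_4),
\]
so $D_1$ is largest only when $h_4\le h_3$. In the second case $D_2$ is largest, and in the first case $D_3$ is.

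This breaks your necessity argument for the second case, because the two $D_1$-inequalities do not imply $h_4\ge h_1-h_3$. For $(h_1,h_2,h_3,h_4)=(10,1,1,2)$ both hold ($2\ge 1.8$ and $4+36-18=22\ge 0$). Yet $h_4<h_1-h_3$, and no $\ILS(14;10^1\,1^2)$ exists by \Cref{thm: single subsquare}. The correct comparison is $y_2\ge 0$ against $t\le h_2h_3$, i.e.\ $h_2(h_1+h_3-h_4)\le 2h_2h_3$. This is exactly $h_4\ge h_1-h_3$, and it corresponds to the paper's choice $A=\{2\}$, $B=\{1\}$, $C=\{3\}$.

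The same error breaks your choice of $t$ for sufficiency. For $(10,9,8,9)$, your $t=40$ gives $y_2=-1$, and $t=h_2h_3=72$ makes your $z$ negative; the feasible interval is $[40\tfrac12,53\tfrac23]$. For $(5,3,3,6)$ your recipe gives $t=0$ or $t=9$, but the feasible interval is $[3,8]$.

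You need $t=\frac12\max\{0,D_1,D_2,D_3\}$, followed by a proof that $t\le\min\{h_2h_3,Z_0/6\}$ in each case, where $Z_0$ is the right-hand side of your $z$ bound. That algebra, which you only assert with ``should'', is the real content of the paper's three cases. Its three choices of the gap between its two values correspond to $t=\frac12h_2h_3$, $\frac12D_2$ and $\frac12D_1$.

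Take care in the second case. Writing $h_4=h_1-h_3+x$, one gets
\[
Z_0-3D_2=x^2+(h_1+2h_2-3h_3)x+h_1h_2+2h_3^2-3h_2h_3 .
\]
The constant term can be negative (it is $-1$ for $(h_1,h_2,h_3)=(3,3,2)$), so you must use $x\ge 2h_3-h_1$, which comes from $h_4\ge h_3$. The paper's displayed constant contains a spurious $h_1^2$, so do not rely on its bound there.
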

\begin{proof}
    In each case the necessary conditions come from \Cref{cond: sufiĉa? ILS}. Setting $A = \{2\}$, $B = \{1\}$, $C = \{3\}$ and $D = \emptyset$ gives that $h_4\geq h_1-h_3$. Let $A = \{1\}$, $B = \{2,3\}$ and $C = D = \emptyset$ to get $h_4\geq h_2 + h_3 - 2\frac{h_2h_3}{h_1}$, and let $A = \{1,4\}$, $B = \{2,3\}$ and $C = D = \emptyset$ to get the final condition.

    Consider the outline square given in \Cref{fig: ILS(h1h2h3) outline}, where $x:y$ in cell $(i,j)$ represents $O_x(i,j) = y$. Since $h_1\geq h_2\geq h_3$, this is a valid outline square when there exists an integer $z\geq 0$ such that
    \begin{enumerate}[label=(\arabic*)]
        \item $z\leq h_2h_3$,
        \item $z\leq h_1h_4-h_1h_2-h_1h_3+2h_2h_3$,
        \item $z\leq h_2h_4-h_1h_2+h_2h_3$,
        \item $z\leq h_3h_4-h_1h_3+h_2h_3$, and
        \item $h_4^2 - h_4(h_1+h_2+h_3)+2h_1h_2+ 2h_1h_3-4h_2h_3+3z\geq 0$.
    \end{enumerate}

    Given an appropriate value of $z$, this outline square satisfies the conditions of \Cref{lemma: ILS from outline}.

    \begin{figure}[h]
        \centering
        \renewcommand{\arraystretch}{2.7}
        \begin{tabular}{|l|l|l|l|}\hline
        $1:h_1^2$ & \thead{$3:h_2h_3$\\$4: h_1h_2-h_2h_3$} & \thead{$2:h_2h_3-z$\\$4:h_1h_3 - h_2h_3+z$} & \thead{$2:h_1h_2-h_2h_3+z$\\$3:h_1h_3 - h_2h_3$\\$4:h_1h_4-h_1h_2-h_1h_3$\\$\qquad+2h_2h_3-z$} \\\hline
        \thead{$3:h_2h_3-z$\\$4: h_1h_2-h_2h_3+z$} & $2:h_2^2$ & $1:h_2h_3$ & \thead{$1:h_1h_2-h_2h_3$\\$3:z$\\$4:h_2h_4-h_1h_2+h_2h_3-z$} \\\hline
        \thead{$2:h_2h_3$\\$4:h_1h_3 - h_2h_3$} & \thead{$1:h_2h_3-z$\\$4:z$} & $3:h_3^2$ & \thead{$1:h_1h_3-h_2h_3+z$\\$4:h_3h_4-h_1h_3+h_2h_3-z$} \\\hline
        \thead{$2:h_1h_2-h_2h_3$\\$3:h_1h_3-h_2h_3+z$\\$4:h_1h_4-h_1h_2-h_1h_3$\\$\qquad+2h_2h_3-z$} & \thead{$1:h_1h_2-h_2h_3+z$\\$4:h_2h_4-h_1h_2$\\$\qquad+h_2h_3-z$} & \thead{$1:h_1h_3 - h_2h_3$\\$2:z$\\$4:h_3h_4-h_1h_3$\\$\qquad+h_2h_3-z$} & \thead{$1:h_1h_4-h_1h_2-h_1h_3$\\$\qquad+2h_2h_3-z$\\$2:h_2h_4-h_1h_2+h_2h_3-z$\\$3:h_3h_4-h_1h_3+h_2h_3-z$\\$4:h_4^2 - h_4(h_1+h_2+h_3)$\\ $\qquad +2h_1h_2+ 2h_1h_3$\\$\qquad-4h_2h_3+3z$}\\\hline
        \end{tabular}
        \caption{An outline square for an $\ILS(n;h_1h_2h_3)$}
        \label{fig: ILS(h1h2h3) outline}
    \end{figure}

    We make different choices for $z$ in three cases.

    \noindent\textbf{Case 1: $h_4\geq h_1$}\\
    Let $z=h_2h_3$. As shown in \Cref{thm: ILS(n;h1h2)}, $h_1h_4-h_1h_2-h_1h_3+2h_2h_3\geq h_2h_3$ when $h_4\geq h_1$. Also, $h_2(h_4-h_1)+h_2h_3\geq h_3(h_4-h_1)+h_2h_3\geq h_2h_3$. Thus, we only show that (5) is satisfied.

    Let $h_4 = h_1+x$ for some $x\geq 0$, and so
    $$h_4^2 - h_4(h_1+h_2+h_3) +2h_1h_2+2h_1h_3-h_2h_3 = x(h_1-h_2) + x^2 + h_1(h_2+h_3) - xh_3 - h_2h_3.$$

    If $x\geq h_3$ then $x^2+h_1h_2\geq xh_3 + h_2h_3$, otherwise $h_1h_2 + h_1h_3\geq xh_3 + h_2h_3$. In either case, $h_4^2 - h_4(h_1+h_2+h_3) +2h_1h_2+2h_1h_3-h_2h_3\geq 0$.

    \noindent\textbf{Case 2: $h_1>h_4\geq h_3$}\\
    Set $z = h_2h_4 - h_1h_2 + h_2h_3$ and assume that $h_4\geq h_1-h_3$. Since $h_4<h_1$, $z = h_2(h_4-h_1)+h_2h_3\leq h_3(h_4-h_1)+h_2h_3\leq h_2h_3$ and $0\leq h_2(h_4-(h_1-h_3)) = z$ from $h_4\geq h_1-h_3$. Also,
    \begin{align*}
        h_1h_4 - h_1h_2 - h_1h_3 + 2h_2h_3 - z &= h_1h_4 - h_2h_4 - h_1h_3 + h_2h_3\\
        &= (h_1-h_2)(h_4-h_3)
    \end{align*}
    and so (2) is satisfied since $h_1\geq h_2$ and $h_4\geq h_3$.

    Letting $h_4 = h_1-h_3+x$ for some $x\geq 0$,
    \begin{align*}
        h_4^2 - h_4(h_1+h_2+h_3)+2h_1h_2+ 2h_1h_3-4h_2h_3+3z &= x^2 + (h_1x + 2h_2x - 3h_3x)\\
        &\qquad +~(h_1^2 + h_1h_2 + 2h_3^2 - 3h_2h_3)
    \end{align*}
    and observe that
    \begin{align*}
        h_1^2 + h_1h_2 + 2h_3^2 - 3h_2h_3 &\geq 2h_2^2 + 2h_3^2 - 3h_2h_3\\
        &= 2(h_2^2 + h_3^2 - 2h_2h_3) + h_2h_3\\
        &= 2(h_2-h_3)^2+h_2h_3\\
        &\geq 0.
    \end{align*}
    Therefore, (5) is satisfied.

    \noindent\textbf{Case 3: $h_4< h_3$}\\
    Let $z = h_1h_4-h_1h_2-h_1h_3+2h_2h_3$ and assume that $h_4\geq h_2+h_3-2\frac{h_2h_3}{h_1}$ and $h_4^2+h_4(2h_1-h_2-h_3)-h_1h_2-h_1h_3+2h_2h_3\geq 0$.

    Thus, $z = h_1(h_4 - h_2-h_3+2\frac{h_2h_3}{h_1})\geq 0$ and as in case 2, $h_2(h_4-h_1)+h_2h_3\leq h_3(h_4-h_1)+h_2h_3\leq h_2h_3$. Since $h_4<h_3$ here and $h_2h_4-h_1h_2+h_2h_3-z = (h_1-h_2)(h_3-h_4)$, (3) is satisfied.

    Observe that
    \begin{align*}
        h_4^2 - h_4(h_1+h_2+h_3)+2h_1h_2+ 2h_1h_3-4h_2h_3+3z &= h_4^2+h_4(2h_1-h_2-h_3)\\
        &\qquad-~h_1h_2-h_1h_3+2h_2h_3,
    \end{align*}
    and so (5) is satisfied by assumption.

    Therefore, in all cases there exists an outline square when the necessary conditions are met.
\end{proof}

The other small case considered for realizations is partitions where the number of distinct integers in the partition is limited. We use the same approach here, but allow $h_{k+1}$ to be distinct to the other parts.

\begin{theorem}
\label{thm: ILS(n;h1^k)}
    An $\ILS(n;h_1^k)$ exists for
    \begin{itemize}
        \item $k=1$ if and only if $n=h_1$ or $n\geq 2h_1$, 
        \item $k=2$ if and only if $n\geq 3h_1$,
        \item $k\geq 3$ if and only if $n\geq kh_1$.
    \end{itemize}
\end{theorem}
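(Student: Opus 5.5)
The cases $k=1$ and $k=2$ follow directly from earlier results. For $k=1$, an $\ILS(n;h_1)$ is a latin square of order $n$ with one subsquare of order $h_1$. By \Cref{thm: single subsquare} this exists if and only if $n=h_1$ or $h_1\leq n/2$, that is, $n\geq 2h_1$. For $k=2$, apply \Cref{thm: ILS(n;h1h2)} with $h_1=h_2$ and $h_3=n-2h_1$. This gives existence if and only if $n-2h_1\geq h_1$, i.e.\ $n\geq 3h_1$.

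For $k\geq 3$, necessity is trivial: the $k$ subsquares are disjoint, so $n\geq kh_1$. For sufficiency, write $h_{k+1}=n-kh_1\geq 0$. If $h_{k+1}=0$, \Cref{thm: a^k square} gives an $\LS(h_1^k)$ since $k\neq 2$. If $h_{k+1}>0$, I will use \Cref{lemma: ILS from outline} and build an outline square associated to $P=(h_1^kh_{k+1})$ with $O_i(i,i)=h_1^2$ for $i\in[k]$. Following the decomposition noted after \Cref{lemma: sum freq arrays}, I will build it as a sum of outline arrays via that lemma. The first piece is the reduction modulo $(h_1^k)$ of an $\LS(h_1^k)$, which gives the diagonal blocks $i:h_1^2$ and off-diagonal cells of size $h_1^2$ with symbols from $[k]$. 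I then pad this $k\times k$ part to a $(k+1)\times(k+1)$ array with zeros in row, column and symbol $k+1$. The second piece must be an outline array for the frequency array $F_2$ with $F_2(i,j)=0$ for $i,j\in[k]$, $F_2(i,k+1)=F_2(k+1,i)=h_1h_{k+1}$ and $F_2(k+1,k+1)=h_{k+1}^2$.

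The main obstacle is constructing this second piece explicitly. In $F_2$, cell $(i,j)$ with $i,j\in[k]$ is empty, but row $i$ must contain symbol $k+1$ exactly $F_2(i,k+1)=h_1h_{k+1}$ times. The only nonempty cell in row $i$ is $(i,k+1)$, which has size $h_1h_{k+1}$, so $O(i,k+1)$ must be $\{(k+1)^{h_1h_{k+1}}\}$. Symmetrically, $O(k+1,j)=\{(k+1)^{h_1h_{k+1}}\}$. Then column $k+1$ receives $kh_1h_{k+1}$ copies of symbol $k+1$, but it may contain only $h_{k+1}^2$. This fails unless $h_{k+1}\geq kh_1$, so the naive split is too rigid. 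I will therefore instead write the full $(k+1)\times(k+1)$ outline square directly, generalising \Cref{fig: ILS(h1h2) outline}, and check non-negativity as follows.

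Take an idempotent-free structure on the $k$ blocks using a latin square $M$ of order $k$ on $[k]$ with $M(i,i)=i$; one exists for $k\geq 3$, $k\neq 2$ (for example via \Cref{thm: a^k square} with $a=1$, or by taking a symmetric idempotent square for odd $k$ and adjusting otherwise). For $i\neq j$ in $[k]$, I will fill $O(i,j)$ with a mix of symbols from $[k]\setminus\{i,j\}$ and symbol $k+1$ so that each of $i,j$ leaves room. Concretely, I will take the reduction $R$ of an $\LS(h_1^k)$, in which off-diagonal cells use symbols other than $i,j$, and transfer mass $t$ from it. For each $\ell\in[k]$, I move $h_{k+1}h_1$ copies of symbol $\ell$ out of row-block $i$ and column-block $j$ (for cells where $\ell\neq i,j$) into $O(i,k+1)$ and $O(k+1,j)$, replacing them by symbol $k+1$. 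I will also place symbols from $[k]$ in the last row and column so that the margins hold. Then $O(k+1,k+1)$ receives symbol $k+1$ with multiplicity $h_{k+1}^2-kh_1h_{k+1}+(\text{transferred})$, plus symbols $\ell$. The transfer is feasible as long as each off-diagonal block has $h_1^2$ entries available, i.e.\ roughly $h_{k+1}\le (k-1)h_1$. When $h_{k+1}$ is larger, the $k=2$ style formula already keeps everything non-negative, as in Case 1 of \Cref{thm: ILS(n;h1h2h3)}. I expect to handle the two regimes $h_{k+1}<h_1$ and $h_{k+1}\geq h_1$ separately, mirroring Cases 1–2 there, with the smaller regime being the delicate one.
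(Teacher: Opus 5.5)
There is a genuine gap. Your arguments for $k=1$ and $k=2$, for the necessity of $n\ge kh_1$ when $k\ge 3$, and for the case $n=kh_1$ are correct and agree with the paper. The problem is the remaining case $k\ge 3$ with $h_{k+1}=n-kh_1>0$. This is the only non-trivial part of the statement, and you give a plan for it rather than a proof.

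\begin{itemize}
\item The transfer step is not specified. The amount $t$ is never defined. You also do not say how the moved copies of each $\ell$ are spread over the cells $(i,j)$ so that every row-block, column-block and symbol margin holds at once. Read literally, moving $h_1h_{k+1}$ copies of each $\ell\ne i$ out of row-block $i$ puts $(k-1)h_1h_{k+1}$ symbols into $O(i,k+1)$, a cell of size $h_1h_{k+1}$.
\item The entries of the last row, the last column and $O(k+1,k+1)$ are never written down. So neither the outline-square conditions nor non-negativity and integrality are checked.
\item Integrality is a real obstacle. For $k=3$ and $h_1=h_4=1$, no uniform amount of symbol $4$ per off-diagonal cell works. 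Putting $1$ copy in each off-diagonal cell repeats $4$ in row $1$. Putting $0$ copies forces three $4$'s into column $4$. So a direct generalisation of \Cref{fig: ILS(h1h2) outline} is not automatic.
\item The claim that a ``$k=2$ style formula'' handles large $h_{k+1}$ comes with no formula for general $k$.
\item The regimes you finally propose ($h_{k+1}<h_1$ versus $h_{k+1}\ge h_1$) do not match the thresholds you derived: $h_{k+1}\le (k-1)h_1$ for the transfer and $h_{k+1}\ge kh_1$ for the naive split. So even the plan does not visibly cover every $h_{k+1}$.
\end{itemize}

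The idea you are missing is that no new outline square is needed, because the extra $h_{k+1}$ rows can be absorbed into further subsquares. Write $h_{k+1}=mh_1+r$ with $m\ge 0$ and $0\le r<h_1$. Then $n=(k+m)h_1+r$, and a realization $\LS(h_1^{k+m}r)$ is already an $\ILS(n;h_1^k)$: simply ignore all but $k$ of its subsquares of order $h_1$. This realization exists by known results:
\begin{itemize}
\item if $r=0$, by \Cref{thm: a^k square}, since $k+m\ge 3$;
\item if $r>0$ and $k+m=3$, by the four-part case of \Cref{thm: small k squares}, since the three largest parts are equal;
\item if $r>0$ and $k+m\ge 4$, by \Cref{thm: squaresatmost2} with $u=k+m\ge 3$.
\end{itemize}
This is exactly the paper's argument. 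It handles every $h_{k+1}\ge 0$ at once, including your case $h_{k+1}=0$, with no case analysis on the size of $h_{k+1}$.
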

\begin{proof}
    As discussed earlier, the case where $k=1$ is covered by \Cref{thm: single subsquare}, and so an $\ILS(n;h_1)$ exists if and only if $n=h_1$ or $n\geq 2h_1$.
    
    For $k=2$, we use the result of \Cref{thm: ILS(n;h1h2)}, and conclude that an $\ILS(n;h_1^2)$ exists if and only if $n\geq 3h_1$.

    Finally, if $k\geq 3$, then $n = kh_1 + h_{k+1}$.
    Let $h_{k+1}=mh_1 + r$ for some integers $m$ and $r$ with $0\leq r<h_1$. Thus, an $\LS(h_1^{k+m}r)$ is an $\ILS(n;h_1^k)$. Since $k\geq 3$, we have that $k+m\geq 3$ and $r<h_1$, so this realization exists by \Cref{thm: small k squares,thm: a^k square,thm: squaresatmost2}. Therefore, an $\ILS(n;h_1^k)$ exists for all $n\geq kh_1$.
\end{proof}

\section{A general construction for incomplete latin squares}
\label{sec: general con}

Complete results for partitions with a limited number of parts or a limited number of distinct parts are useful and follow the approach taken for realizations. However, it is most interesting to have results which hold for any choice of orders for the subsquares. When considering incomplete latin squares, the order of the latin square must be at least the sum of the orders of the subsquares. It is natural to ask how close $n$ can be to this sum. Formally, given a partition $(h_2\dots h_k)$, what is the smallest value of $h_1$ such that an $\ILS(n;h_2\dots h_k)$ exists for all $n\geq h_1+\sum_{i=2}^kh_i$?

It was shown in \cite{kemp2025threedisjoint} that a realization always exists when the largest three subsquares are of the same order.

\begin{theorem*}[\cite{kemp2025threedisjoint}]
\label{thm: 3 subsquares}
    There exists an $\LS(h_1^3h_4\dots h_k)$ for all $h_1\geq h_4\geq\dots\geq h_k$.
\end{theorem*}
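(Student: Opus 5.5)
The plan is to build, by induction on $k$, an outline square associated to $P=(h_1^3h_4\dots h_k)$ with $O_i(i,i)=h_i^2$ for every $i\in[k]$. By \Cref{thm: outline rectangle to square}, such an outline square lifts to an $\LS(h_1^3h_4\dots h_k)$, exactly as in \Cref{lemma: ILS from outline}.

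\textbf{Base case and induction hypothesis.} The base case $k=3$ is an $\LS(h_1^3)$, which exists by \Cref{thm: a^k square}. For the inductive step, assume we have an outline square $O$ for $(h_1^3h_4\dots h_{k-1})$ whose diagonal cells are pure subsquare cells. We want to add a new part $h_k\le h_{k-1}$. We strengthen the hypothesis so that $O$ carries some slack. Concretely, for every pair of distinct indices $i,j$, the cell $(i,j)$ should contain enough copies of some third symbol $\ell\notin\{i,j\}$; the three blocks of order $h_1$ will be the main reservoir for this.

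\textbf{Inductive step: an outline analogue of prolongation.} We choose non-negative integers $x(i,j,\ell)$, indexed by ordered triples of distinct indices in $[k-1]$, with $x(i,j,\ell)\le O_\ell(i,j)$. We then modify $O$ as follows:
\begin{itemize}
\item remove $x(i,j,\ell)$ copies of $\ell$ from cell $(i,j)$ and put $x(i,j,\ell)$ copies of the new symbol $k$ there;
\item put $x(i,j,\ell)$ copies of $\ell$ into the new cell $(i,k)$ and into the new cell $(k,j)$;
\item put $h_k^2$ copies of $k$ into cell $(k,k)$.
\end{itemize}
Checking conditions (1)--(3) of an outline square for the new row, column and symbol $k$ reduces to the linear equations
\[
\sum_{j,\ell}x(i,j,\ell)=h_ih_k,\qquad \sum_{i,\ell}x(i,j,\ell)=h_jh_k,\qquad \sum_{i,j}x(i,j,\ell)=h_\ell h_k .
\]
These say that every index carries weight $h_ih_k$ in each coordinate. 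The old rows, columns and symbols stay balanced, because each unit moved out of a cell is compensated in the new row or column.

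\textbf{Finding $x$.} Following the symmetrisation idea of \Cref{lemma: always symmetric solution ILS}, we look for a symmetric $x$. This becomes a fractional "$h_ih_k$-regular" weighting of the triangles of the complete graph on $[k-1]$, subject to capacities $O(i,j,\ell)$. We first solve it rationally. The natural choice is to route most of the weight through the triangle $\{1,2,3\}$ and through triangles $\{i,a,b\}$ with $a,b\in\{1,2,3\}$. This works because $h_i\le h_1$ and $h_k\le h_i$, so the demand $h_ih_k\le h_1^2$ is small compared with the $h_1$-blocks. We then pass to integers using a flow/rounding argument, in the same spirit as \Cref{lemma: sum freq arrays,lemma: summing rows/columns of outline array}. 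Finally we verify that the resulting square again has the slack required by the strengthened hypothesis.

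\textbf{Main obstacle.} I expect the hardest step to be the capacity constraint $x(i,j,\ell)\le O_\ell(i,j)$ together with preserving slack for later steps. Cells between two small blocks may contain very few ``third'' symbols. If direct capacity fails, I would first try to avoid the problem by building the outline square in one shot instead of inductively. The idea is to treat the three $h_1$-blocks as a hub and decompose the frequency array $F(i,j)=h_ih_j$ (with the diagonal split off as in the observation after \Cref{lemma: sum freq arrays}) into pieces supported on $\{1,2,3,i\}$ and on $\{1,2,3,i,j\}$. Each such piece would be realisable from an outline for a four- or five-part structure, and the pieces would then be combined by \Cref{lemma: sum freq arrays}.
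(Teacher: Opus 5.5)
The reduction is correct, but it moves the whole theorem into the existence of an integral prolongation array $x$ at every step, which you never construct. Both concrete suggestions you give provably fail once there are many small parts, and the rounding step is unjustified.

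\textbf{The bookkeeping is fine; the missing step is the whole content.} Moving $x(i,j,\ell)$ copies as you describe leaves every old row, column and symbol count unchanged. The new row, column and symbol $k$ balance exactly when your three marginal equations hold. What remains is to produce, at every step, a non-negative integral $x$ with $x(i,j,\ell)\le O_\ell(i,j)$. The proposal never does this. The strengthened hypothesis (``enough copies of some third symbol'') is never quantified, so there is no induction to run.

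\textbf{Your hub routing cannot work in general.} Suppose that at every step all of $x$ lies on triangles with at least two indices in $\{1,2,3\}$.
For $4\le i<j$, the cells $(i,j)$ and $(j,i)$ are created at step $j$. They receive only symbols $\ell$ from triples $(i,j',\ell)$ or $(i',i,\ell)$, whose other two indices must then both lie in $\{1,2,3\}$. They are never altered afterwards, since that would need the triangle $\{i,j,\ell\}$.
So every cell between two small blocks holds only hub symbols. The $h_ih_\ell$ copies of each small symbol $\ell\ne i$ in row block $i$ must then all lie in the three hub columns, whose cells in that row block have total size $3h_1h_i$.
Summing over $i\ge4$ gives the necessary condition $s^2-\sum_{i\ge4}h_i^2\le 3h_1s$, where $s=\sum_{i\ge4}h_i$. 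This fails for $\LS(h^8)$, where it reads $20h^2\le 15h^2$, yet that square exists by \Cref{thm: a^k square}.
The per-step comparison $h_ih_k\le h_1^2$ is beside the point: the hub cells have a fixed capacity while the demand accumulates over all steps.

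\textbf{The same count rules out your fallback.} An outline array for a frequency array supported on $\{1,2,3,i,j\}$ with zero diagonal can only put symbols $1,2,3$ into cell $(i,j)$. So a substantial share of the weight must go through triangles with two or three small indices. Those are exactly the cells you flag as the obstacle, and the proposal offers nothing for them.

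\textbf{The passage to integers is also unsupported.} The system for $x$ is a capacitated axial three-index transportation problem. Its constraint matrix is not totally unimodular: with $0/1$ capacities and unit margins it encodes three-dimensional matching. So no flow or rounding argument converts a rational solution into an integral one. \Cref{lemma: sum freq arrays,lemma: summing rows/columns of outline array} are cell-wise union and amalgamation and have no integrality content.
Seeking a symmetric $x$ makes this worse. In the very first step from $\LS(h_1^3)$, the only symmetric choice puts $h_1h_4/2$ on each permutation of $(1,2,3)$, which is not an integer when $h_1h_4$ is odd. An asymmetric cyclic choice works there, but you would need explicit integral patterns at every step.

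\textbf{Comparison with the paper.} The paper does not prove this statement; it quotes it from \cite{kemp2025threedisjoint}. It indicates that the proof there is built on the circulant partial latin square of \Cref{circulant_construction}, which places all the small subsquares along the diagonal at once. That is combined with outline-array decompositions of the type in \Cref{lemma: ILS L thing} for the remaining parameter ranges. The circulant is what fills the cells between small blocks with small symbols, and that is precisely the ingredient missing from your one-part-at-a-time prolongation.
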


Using this result on realizations, we can easily get an upper bound for this smallest possible $h_1$.

\begin{theorem}
\label{thm: ILS 2h1}
    Given an integer partition $(h_2\dots h_k)$, if $h_1\geq 2h_2$ and $n = h_1 + \sum_{i=2}^kh_i$, then there exists an $\ILS(n;h_2\dots h_k)$.
\end{theorem}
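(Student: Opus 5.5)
The plan is to realize the extra $h_1$ rows and columns as a union of subsquares, so that the whole square becomes a realization covered by the theorem of \cite{kemp2025threedisjoint} (there exists an $\LS(h_1^3h_4\dots h_k)$ for all $h_1\geq h_4\geq\dots\geq h_k$). Since $h_1\geq 2h_2$, we can split $h_1$ into two parts that are each at least $h_2$. Write $h_1 = a + b$ with $a = \lceil h_1/2\rceil$ and $b=\lfloor h_1/2\rfloor$, so that $a\geq b\geq h_2$.

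In the simplest case we want three equal largest parts. If $h_1$ is even, then $a=b$. A latin square of order $n$ with pairwise disjoint subsquares of orders $a,a,h_2,\dots,h_k$ is in particular an $\ILS(n;h_2\dots h_k)$, because $a+a+h_2+\dots+h_k = n$. However, to invoke \Cref{thm: 3 subsquares} we need three parts equal to the largest part, and here we only have two parts equal to $a$ (unless $h_2=a$). So a cleaner split is required: we aim to write $h_1$ together with $h_2$ as three equal parts plus leftovers that are no larger.

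\textbf{Revised plan.} Take $c = h_2$ and consider the multiset of parts $(h_2, h_2, h_2, r_1, r_2, h_3,\dots,h_k)$, where $h_1 - 2h_2$ is broken into parts of size at most $h_2$. Concretely, write $h_1 - 2h_2 = q h_2 + r$ with $0\le r<h_2$, which is possible since $h_1-2h_2\geq 0$. The partition
$$P=(h_2^{3+q}\,h_3\dots h_k\,r)$$
of $n$ (omitting $r$ if $r=0$) has at least three parts equal to its largest part $h_2$, and all other parts are at most $h_2$. After reordering into nonincreasing order, it has the form $(h_2^3h_4'\dots)$ required by \Cref{thm: 3 subsquares}, so an $\LS(P)$ exists. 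Its subsquares include disjoint subsquares of orders $h_2,h_3,\dots,h_k$: one of the copies of $h_2$ together with the parts $h_3,\dots,h_k$. Hence it is an $\ILS(n;h_2\dots h_k)$, with the remaining subsquares simply ignored; the orders of those ignored subsquares sum to $(2+q)h_2+r = h_1$.

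The only points to check are bookkeeping. First, the parts sum to $n$. Second, the reordering is legitimate, which holds since every $h_i\le h_2$ and $r<h_2$. Third, \Cref{thm: 3 subsquares} allows arbitrary many further parts, including repeated copies of $h_2$. I expect no real obstacle beyond confirming that the hypothesis $h_1\ge 2h_2$ is exactly what guarantees the two extra copies of $h_2$ needed for three equal largest parts.
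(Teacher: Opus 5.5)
Your proof is correct and matches the paper's argument: you add two extra parts of size $h_2$ to obtain three equal largest parts, invoke \Cref{thm: 3 subsquares}, and ignore the auxiliary subsquares. The only cosmetic difference is that the paper splits the leftover $h_1-2h_2$ into copies of $h_k$ plus a remainder below $h_k$, whereas you use copies of $h_2$ plus a remainder below $h_2$; both work, since the cited theorem allows any further parts of size at most $h_2$.
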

\begin{proof}
    Let $n-2h_2-\sum_{i=2}^kh_i = qh_k + r$, where $q\geq 0$ and $0\leq r< h_k$. By \Cref{thm: 3 subsquares}, there exists an $\LS(h_2^3h_3\dots h_k^{q+1}r)$. This is an $\ILS(n;h_2\dots h_k)$.
\end{proof}

By a similar construction to \Cref{thm: 3 subsquares} we can get lower values of $h_1$ for all partitions. We start with a result used in that construction which only holds for values of $r$ that are odd, where $r = \sum_{i=2}^kh_i$. We then obtain the same result for even $r$.

\begin{lemma*}[\cite{kemp2025threedisjoint}]
\label{circulant_construction}
    Let $(h_1h_2h_3\dots h_k)$ be an integer partition where $r=\sum_{i=2}^k h_i$ is odd and $h_i\ge h_{i+1}$ for $2\le i \le k-1$. Further suppose that $h_2 \le (r+1)/4$ and $2h_2 \le h_1 \le r+1-2h_2$.
    Then there exists an $\LS(h_1h_2\dots h_k)$.
\end{lemma*}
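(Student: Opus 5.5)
The plan is a direct construction by \emph{prolongation} of a cyclic latin square of odd order $r$, rather than the outline-square machinery. Index the rows, columns and symbols of the $r\times r$ part by $\mathbb{Z}_r$. Split $\mathbb{Z}_r$ into consecutive intervals $I_2,\dots,I_k$ with $|I_i|=h_i$. Start from the idempotent quasigroup $S(x,y)=\frac{r+1}{2}(x+y)$, i.e.\ $(x+y)/2$ in $\mathbb{Z}_r$, which exists because $r$ is odd. For each $d\in\mathbb{Z}_r$ the ``diagonal'' $D_d=\{(x,x+d)\}$ carries the symbols $x+d/2$, so every $D_d$ is a transversal. The $h_1$ rows, columns and symbols of the first subsquare are added as a border, and the realization is assembled in three stages.

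\textbf{Stage 1 (prolongation).} Choose a set $\Delta$ of $h_1$ differences $d$ whose diagonals $D_d$ avoid every block $I_i\times I_i$. Since every block has side at most $h_2$, it suffices to take $h_2\le d\le r-h_2$. There are $r+1-2h_2$ such differences, which is exactly where the hypothesis $h_1\le r+1-2h_2$ enters. For each $d_t\in\Delta$:
\begin{itemize}
\item replace every entry on $D_{d_t}$ by the new symbol $\infty_t$;
\item put the displaced symbol $S(x,x+d_t)$ in cell $(x,\infty_t)$ of new column $\infty_t$, and the displaced symbol $S(y-d_t,y)$ in cell $(\infty_t,y)$ of new row $\infty_t$;
\item fill the $h_1\times h_1$ corner with any latin square on $\{\infty_t\}$.
\end{itemize}
Each new column receives a full transversal's worth of symbols, so it is a permutation of $\mathbb{Z}_r$. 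Each row $x$ receives the distinct symbols $S(x,x+d_t)$, and the new rows behave the same way. So the result is a latin square with the required first subsquare of order $h_1$.

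\textbf{Stage 2 (forcing the blocks $I_i\times I_i$ to be subsquares).} This is the main obstacle. For a cell $(x,y)$ in a block, the entry $(x+y)/2$ lies in $I_i$ exactly when $x+y$ is even as an integer. Otherwise it equals $(x+y+r)/2$, roughly antipodal, and far outside $I_i$. Only cells with $|x-y|<h_2$ are involved, that is, diagonals $D_d$ with $|d|<h_2$.

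My first attempt will be to reorganize these near-main diagonals using cycle switches. On a block the parity pattern is a checkerboard, and the bad cells pair up along short cycles. I would first try to rewrite each block as the cyclic square $x+y-\min I_i$ reduced modulo $h_i$ inside $I_i$. The symbols this removes from rows and columns of $I_i$ would be re-inserted into the off-block cells of the diagonals $D_d$ with $|d|<2h_2$. There they swap with the entries those cells held, keeping every row and column latin.

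\textbf{Stage 3 (checking room for the swaps).} These swaps must not touch the prolongation diagonals in $\Delta$. That requires all diagonals with $|d|<2h_2$ to be disjoint from $\Delta$, which forces $\Delta$ to come from $2h_2\le d\le r-2h_2$. Two hypotheses are what should make this workable:
\begin{itemize}
\item $h_2\le (r+1)/4$ keeps the repair window $|d|<2h_2$ from wrapping around $\mathbb{Z}_r$ and colliding with itself.
\item $h_1\ge 2h_2$ enters because the displaced antipodal symbols may need to be routed through the new border. If so, I would reserve $2h_2$ of the border transversals to carry them, and choose $\Delta$ to contain the $2h_2$ differences adjacent to the repair window.
\end{itemize}
I am not certain this bookkeeping closes. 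If it does not, the fallback is to phrase Stage 2 as an outline square (\Cref{lemma: ILS from outline} with $h_{k+1}$ absorbed into $h_1$). I would compute its entries from the modified circulant and then lift it with \Cref{thm: outline rectangle to square}; this only requires nonnegativity of the counts, which the two inequalities should give.
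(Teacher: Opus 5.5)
There is a genuine gap, and it is exactly the step you flag: Stage 2 is never carried out, and your choice of $\Delta$ makes it hard to carry out at all.

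\textbf{Why the repair does not close.} Your diagonals in $\Delta$ avoid the blocks. So every bad block cell (one with odd difference) still holds the antipodal symbol $(x+y+r)/2$, which lies in some other block $I_j$ with $j\neq i$. A repair must then pull a class-$i$ symbol in and push a class-$j$ symbol out at the same time. Nothing you describe supplies a compensating cell with matching content. Inside an actual latin square these switches would also have to be genuine trades, and you do not exhibit any.

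Stage 3 makes things worse. Requiring $\Delta\subseteq[2h_2,r-2h_2]$ leaves only $r+1-4h_2$ differences. That is too few once $h_1>r+1-4h_2$, a range the hypothesis $h_1\le r+1-2h_2$ explicitly allows. The count $r+1-2h_2$ in Stage 1 matches the hypothesis only numerically. The fallback is not an argument either, since writing down a valid outline square for the realization is the whole problem.

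\textbf{The missing idea.} It is visible in the circulant construction the paper carries out for even $r$ (\Cref{lemma: circulant square for even r}, \Cref{even r construction}): put the bad diagonals themselves into $\Delta$. Take $\Delta\supseteq\{\pm1,\pm3,\dots,\pm(2h_2-1)\}$, and keep the even diagonals $0,\pm2,\dots,\pm(2h_2-2)$, which carry the midpoints $(x+y)/2$. This is precisely what the hypotheses encode:
\begin{itemize}
\item $4h_2-1\le r$ makes these $4h_2-1$ diagonals distinct;
\item the $2h_2$ odd ones must be deleted;
\item up to $r-(4h_2-1)$ further diagonals may be deleted.
\end{itemize}
Together these give $2h_2\le h_1\le r+1-2h_2$. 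Every bad block cell now holds a border symbol, which is class $1$ after amalgamation.

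\textbf{The swap.} Let $I_i=S_i+[h_i]$, take $a<b$ with $b-a$ odd, and put $x=S_i+1-a$ and $y=S_i+2h_i+1-b$.
\begin{itemize}
\item The cells $(S_i+a,y)$ and $(x,S_i+b)$ lie on even diagonals with differences in $[2,2h_i-2]$. They contain $S_i+\frac{2h_i+1+a-b}{2}$ and $S_i+\frac{1+b-a}{2}$, both in $I_i$.
\item The cell $(x,y)$ has odd difference $2h_i-(b-a)\le 2h_2-1$, so it holds class $1$.
\end{itemize}
Exchanging classes $1$ and $i$ on these four cells preserves every row and column count of the outline rectangle for $((h_11^r),(h_11^r),(h_1h_2\dots h_k))$. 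One checks that the auxiliary cells for different $(i,a,b)$ do not collide, and handles $a>b$ by transposing.

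This exchange is not an intercalate of the actual square, because the two class-$i$ symbols differ. So you must pass to the outline rectangle, fill the border row and column by counting, and lift with \Cref{thm: outline rectangle to square}. Do not attempt switches inside a genuine latin square after prolongation.
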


Throughout the following proof we work modulo $r$ with residues in $[r]$. We use $a\oplus b$ and $a\ominus b$ to denote $a+b\pmod r$ and $a-b\pmod r$ respectively. Also, let $k\{x\}$ denote the multiset consisting of $k$ copies of element $x$, and so $\sum_{i=1}^n k_i\{x_i\} $ is the multiset consisting of $k_i$ copies of $x_i$ for $i\in [n]$.

\begin{lemma}
\label{lemma: circulant square for even r}
    For integers $r$, $h_1$ and $h_2$, where $r$ is even, $1\leq h_2\leq \frac{1}{4}r$ and $2h_2\leq h_1\leq r+1-2h_2$, there exists a partial latin square $L$ of order $r$ with $r-h_1$ transversals such that
    \begin{itemize}
        \item the cells $(i,j)$ where $j\ominus i\in\{2\ell-1\mid \ell\in[h_2]\}\cup\{r+1-2\ell\mid \ell\in[h_2]\}$ are empty, and
        \item the cells $(i,j)$ where $j\ominus i\in\{2\ell\mid\ell\in[h_2-1]\}\cup\{r+2-2\ell\mid\ell\in[h_2]\}$ form transversals where for $0\leq d\leq h_2-1$, $L(i,i\oplus 2d) = L(i\oplus 2d,i) = i\oplus d$.
    \end{itemize}
\end{lemma}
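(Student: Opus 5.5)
The plan is to build $L$ as a union of whole circulant diagonals. Each diagonal with difference $e=j\ominus i$ is filled by the rule $L(i,i\oplus e)=i\oplus c_e$ for a chosen shift $c_e$. Such a diagonal is automatically a transversal. Two diagonals $e,e'$ are compatible (no symbol repeated in a row or column) exactly when $c_e\neq c_{e'}$ and $c_e-e\neq c_{e'}-e'$. So the problem reduces to choosing a set $D$ of differences, disjoint from the prescribed empty ones, and an injective shift map on $D$ with $c_e-e$ also injective.

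\textbf{Step 1 (prescribed diagonals).} For the prescribed diagonals, the rule $L(i,i\oplus 2d)=L(i\oplus 2d,i)=i\oplus d$ means:
\begin{itemize}
\item difference $2d$ gets shift $d$ for $d\in\{0,\dots,h_2-1\}$;
\item difference $-2d$ gets shift $-d$.
\end{itemize}
Thus the shifts used are $\{-(h_2-1),\dots,h_2-1\}$, and the values $c_e-e$ are the same set. These are $2h_2-1<r$ distinct residues, so the prescribed cells form $2h_2-1$ compatible transversals. Also, the $2h_2$ empty diagonals and these $2h_2-1$ filled ones occupy exactly the differences $e\equiv -(2h_2-1),\dots,2h_2-1$. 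The remaining differences are $2h_2,\dots,r-2h_2$, which is $r-4h_2+1\geq 1$ diagonals since $h_2\leq r/4$.

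\textbf{Step 2 (count and first family).} We need $t=r-h_1-(2h_2-1)$ further transversals. The hypothesis $2h_2\le h_1\le r+1-2h_2$ gives $0\le t\le r-4h_2+1$, so there is room. The natural first family mimics the odd case: take the even differences $e=2m$ with $h_2\le m\le r/2-h_2$ and shifts $c=m$, so $c-e=-m$. The values $m$ lie in $[h_2,r/2-h_2]$ and the values $-m$ lie in $[r/2+h_2,r-h_2]$. These ranges are disjoint from each other and from the prescribed range, so this gives $r/2-2h_2+1$ compatible extra diagonals. If $t\le r/2-2h_2+1$ (equivalently $h_1\ge r/2$), we simply use $t$ of them and are done.

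\textbf{Step 3 (odd differences).} If $h_1<r/2$, we must also fill $s=r/2-h_1$ odd differences from $\{2h_2+1,\dots,r-2h_2-1\}$. There are $r/2-2h_2$ of these, and $s\le r/2-2h_2$ holds since $h_1\ge 2h_2$. The shifts still unused lie in $A=[r/2-h_2+1,\,r-h_2]$, and the unused values of $c-e$ lie in $B=[h_2,\,r/2+h_2-1]$.

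If the even family is kept in full, we need $s$ odd differences $e$ with injective choices $c_e\in A$ and $c_e-e\in B$. This is a Skolem/Langford-type pairing of $A$ with $B$ realising prescribed odd differences. It is the step I expect to be the main obstacle, since the even case lacks the "halving" $e\mapsto e/2$ that made the odd-$r$ construction uniform.

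What I would try first is to free room by dropping some even diagonals, trading one even diagonal for one odd one. Then I would pair the shifts symmetrically around $r/2$, for instance $c=r/2+u$ with $c-e=r/2-u-1$, giving $e=2u+1$ for $u$ in a suitable interval $h_2\le u\le \dots$. I would handle the parity of $r/2$ (and possibly of $h_2$) as separate cases, and check in each case that the values used are disjoint from those already used by the retained even diagonals.

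\textbf{Step 4 (verification).} Finally, verify the two required properties. The total number of diagonals, all pairwise compatible transversals, is $r-h_1$. All prescribed empty diagonals remain empty, because every added difference lies in $[2h_2,r-2h_2]$.
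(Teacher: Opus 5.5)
\textbf{Verdict: the proposal has a gap, but your own suggested pairing closes it.} Your reduction to circulant diagonals is correct: diagonals $e,e'$ are compatible iff $c_e\neq c_{e'}$ and $c_e-e\neq c_{e'}-e'$. Steps 1, 2 and 4 are also correct. The gap sits exactly where the lemma has content. The prescribed diagonals plus your even family give only the $r/2$ even diagonals. The lemma, however, needs up to $r-2h_2$ transversals. For $h_1<r/2$, your Step 3 only describes what you would try (an unspecified range for $u$, a possible trade of even diagonals for odd ones, parity case splits) and verifies nothing. So that case is not proved as written.

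\textbf{Closing the gap.} The pairing you wrote down works immediately, and none of the anticipated complications occur. Keep the whole even family and set $e=2u+1$, $c_e=r/2+u$, so that $c_e-e=r/2-u-1$, for $h_2\le u\le r/2-h_2-1$. Then:
\begin{itemize}
\item the differences $2u+1$ are exactly the $r/2-2h_2$ free odd differences $2h_2+1,\ldots,r-2h_2-1$;
\item the shifts fill $[r/2+h_2,\,r-h_2-1]\subseteq A$;
\item the values $c_e-e$ fill $[h_2,\,r/2-h_2-1]\subseteq B$;
\item both are injective in $u$.
\end{itemize}
Hence all $r-2h_2$ diagonals other than the prescribed empty ones are pairwise compatible. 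You finish by deleting any $h_1-2h_2\le r-4h_2+1$ non-prescribed diagonals. This also makes the case split in Step 2 unnecessary. There is no Skolem-type difficulty either: any $s$ of the free odd differences would do.

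\textbf{Comparison with the paper.} The completed argument uses the same ``fill every free diagonal, then delete transversals'' strategy as the paper. The paper, however, splits the free differences at $r/2$ rather than by parity. In your notation its first row amounts to $c_e\equiv h_2-e$ for $2h_2\le e\le r/2$ and $c_e\equiv 1-h_2-e$ for $r/2<e\le r-2h_2$. It then checks $L(1,a)\ominus L(1,b)\not\equiv a\ominus b$ by a case analysis. Your parity-based assignment replaces that case analysis with the two interval containments above.
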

\begin{proof}
    There are $h_1\geq 2h_2$ empty cells in each row and column of $L$, where $2h_2$ of those are fixed by the requirements of the lemma. We construct $L$ to have the $2h_2$ required empty cells per row, and more can be gained by removing any of the $r-4h_2+1$ transversals not required in the lemma.

    We start by completing the first row of $L$. For $i\in[r]$, let
    $$L(1,i) = \begin{cases}
        \emptyset, & \text{if $i=2j$ or $i=r+2-2j$ for $j\in[h_2]$,}\\
        \frac{i+1}{2}, & \text{if $i=2j-1$ for $j\in[h_2]$,}\\
        \frac{i+r+1}{2}, & \text{if $i=r+1-2j$ for $j\in[h_2-1]$,}\\
        r-i+h_2+2, & \text{if $i=2h_2+j$ for $j\in[\frac{r-4h_2+2}{2}]$,}\\
        r-i-h_2+3, & \text{if $i=\frac{r+2}{2}+j$ for $j\in[\frac{r-4h_2}{2}]$.}
    \end{cases}$$
    The four cases with non-empty $L(1,i)$ respectively give the all symbols $s$ such that $1\leq s\leq h_2$, $r+2-h_2\leq s\leq r$, $\frac{r}{2}+h_2+1\leq s\leq r-h_2+1$ and $h_2+2\leq s\leq \frac{r}{2}-h_2+1$, and these are clearly four distinct ranges. Thus, there are $r-2h_2$ unique symbols in the first row of $L$.

    For each $i,j\in[r]$ let
    $$L(i,j) = L(1,j\ominus i\oplus 1)\oplus (i-1).$$
    It follows that each row is a cyclic shift of the columns and symbols of the first row. Thus, each row has no repeated symbols.

    If $L(i,j) = L(i',j)$ for some $i,i',j\in[r]$, then
    \begin{align*}
        L(1,j\ominus i\oplus 1)\oplus (i-1) &= L(1,j\ominus i'\oplus 1)\oplus (i'-1)\\
        L(1,j\ominus i\oplus 1) \ominus L(1,j\ominus i'\oplus 1) &= i'\ominus i\\
        L(1,a) \ominus L(1,b) &= a\ominus b
    \end{align*}
    for $a=j\ominus i\oplus 1$ and $b=j\ominus i'\oplus 1$.

    Suppose that $a>b$ and $L(1,a) \ominus L(1,b)= a-b$. If $a,b\in\{2j-1\mid j\in[h_2]\}$ then $\frac{a+1}{2}-\frac{b+1}{2} = \frac{a-b}{2} = a-b$. This is only true when $a=b$. If $a,b\in\{r+1-2j\mid j\in[h_2-1]\}$, then $\frac{a-b}{2}=a-b$ again and so $a=b$.

    If $2h_2+1\leq a,b\leq \frac{r+2}{2}$, then $-a+b \equiv a-b$. This holds only when $a-b\equiv \frac{r}{2},r$ and since $a-b\leq \frac{r-4h_2}{2}$, it must be that $a=b$. Similarly, if $\frac{r+4}{2}\leq a,b\leq r+1-2h_2$ then $-a+b\equiv a-b$. Thus, $a=b$ in this case also.

    We also check each combination of the four cases.

    If $a\in\{r+1-2j\mid j\in[h_2-1]\}$ and $b\in\{2j-1\mid j\in[h_2]\}$ then $\frac{r}{2} = \frac{a-b}{2}$, thus $a=b$.

    If $b\in\{2j-1\mid j\in[h_2]\}$ then there exists $j\in[h_2]$ such that $b=2j-1$ and $L(1,b) = j$. If $2h_2+1\leq a\leq \frac{r+2}{2}$ then $r-a+h_2+2 - j \equiv a-2j+1$ and so $j \equiv 2a-h_2-1$. Thus, for a contradiction, $3h_2+1\leq j\leq r-h_2+1$. Similarly, if $\frac{r+4}{2}\leq a\leq r+1-2h_2$ then $j\equiv 2a+h_2-2$ and so $h_2+2\leq j\leq r-3h_2$.
    If $a\in\{r+1-2j\mid j\in[h_2-1]\}$ then there exists $j\in[h_2-1]$ such that $a=r+1-2j$ and $L(1,a) = r+1-j$. If $2h_2+1\leq b\leq \frac{r+2}{2}$ then $r+1-j-r+b-h_2-2 \equiv r+1-2j-b$. It follows that $j \equiv -2b+h_2+2$ and so $h_2\leq j\leq r-3h_2$. Similarly, if $\frac{r+4}{2}\leq b\leq r+1-2h_2$ then $j\equiv -2b-h_2+3$ and so $3h_2+1\leq j\leq r-h_2-1$. In either case, this is not possible.

    Finally, if $\frac{r+4}{2}\leq a\leq r+1-2h_2$ and $2h_2+1\leq b\leq \frac{r+2}{2}$, then $r-a-h_2+3-r+b-h_2-2 \equiv a-b$. Thus, $-a+b-2h_2+1 \equiv a-b$ which means that $2(a-b)\equiv -2h_2+1$. Since $r$ is even, this is a contradiction.

    Therefore, $L(1,a)-L(1,b)\not\equiv a-b$ for all $a,b\in[r]$ and so $L$ is a partial latin square.
\end{proof}

\begin{lemma}
\label{even r construction}
    Let $(h_1h_2h_3\dots h_k)$ be an integer partition, where $r=\sum_{i=2}^k h_i$ is even and $h_i\ge h_{i+1}$ for $2\le i \le k-1$. Further suppose that $h_2 \le \frac{1}{4}r$ and $2h_2 \le h_1 \le r+1-2h_2$.
    Then there exists an $\LS(h_1h_2\dots h_k)$.
\end{lemma}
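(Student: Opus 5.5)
We use the partial latin square $L$ of order $r$ from \Cref{lemma: circulant square for even r} as the core of the construction. We then border it with $h_1$ new rows, $h_1$ new columns and $h_1$ new symbols, so that the $h_1\times h_1$ corner is the subsquare of order $h_1$. Throughout, the plan mirrors the odd case of \Cref{circulant_construction} from \cite{kemp2025threedisjoint}.

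\textbf{Step 1: the diagonal subsquares.} Index the rows, columns and symbols of the $r\times r$ part by $\mathbb{Z}_r$, and split $[r]$ into consecutive intervals $I_2,\dots,I_k$ of lengths $h_2,\dots,h_k$. For each $m\ge 2$, we want a subsquare of order $h_m$ on rows and columns $\{2x : x\in I_m\}$ and symbols $I_m$. The relevant cells $(2x,2y)$ have difference $2(y-x)$ with $|y-x|\le h_2-1$. The lemma guarantees exactly that $L(2x,2x\oplus 2d)=2x\oplus d$ for $0\le d\le h_2-1$, and symmetrically for negative $d$. So on the even rows, columns $2y$ receive the symbol $x+y$ (halving the sum of the coordinates), with $x,y\in I_m$. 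We would rather relabel the rows and columns by the bijection $x\mapsto 2x$ if $2$ were invertible, but $r$ is even, so this is the point of difference from the odd case.

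\textbf{Step 1, handling even $r$.} Because $2$ is not invertible mod $r$, we instead use the symmetric form $L(i,i\oplus2d)=L(i\oplus 2d,i)=i\oplus d$. We place the subsquare for $I_m$ on the rows $\{a_m+2t\}$ and columns $\{a_m+2t'\}$ for $t,t'$ in the appropriate range. Within this block the entries are $a_m+t+t'$, which form a back-circulant latin square of order $h_m$. The intervals of starting points are chosen so that the row sets of distinct subsquares are disjoint and their symbol sets are $I_m$. I expect this bookkeeping, namely choosing the row and column index sets so that they are disjoint across $m$ while the symbol sets partition $[r]$, to be the main obstacle. I would first try taking the row sets to be the even residues for half of the subsquares and the odd residues for the other half, packed greedily by size.

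\textbf{Step 2: completing the core.} The cells $(i,j)$ of the transversal diagonals $j\ominus i\in\{2\ell\}\cup\{r+2-2\ell\}$ that lie outside the subsquare blocks are then emptied. We also empty the $r-h_1$-complement diagonals, so that each row and column of the $r\times r$ part has exactly $h_1$ empty cells and $h_1$ missing symbols. This requires $h_1\ge 2h_2$ and $h_1\le r+1-2h_2$, and the number of removable transversals is consistent with $r-4h_2+1\ge r-h_1-(2h_2-1)$.

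\textbf{Step 3: adding the border.} Each removed transversal diagonal $D_c$ is used to create one new row and column. A transversal removed from $L$ contains each symbol once, so its cells can be refilled with a new symbol $\infty_c$, while its old symbols are moved to new row $c$ and new column $c$, in the standard prolongation. The empty diagonals from the first bullet of the lemma are handled the same way. They are treated as pairs $\{2\ell-1, r+1-2\ell\}$, and their cells contain no symbol in $L$, but we require that their union is a union of permutation matrices. The missing symbols in each row of the union then form a latin structure with $h_1$ new columns, which we fill by a Hall/Ryser-type argument or explicitly using the cyclic structure. Finally, the $h_1\times h_1$ corner is filled with any latin square on the $h_1$ new symbols, which gives the subsquare of order $h_1$. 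All the verifications are cyclic, so each reduces to checking the first row, as in the proof of \Cref{lemma: circulant square for even r}.
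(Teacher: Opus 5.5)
There is a genuine gap in Step 1, which is where the real difficulty of the lemma lies. On rows $a_m+2t$ and columns $a_m+2t'$ ($0\le t,t'\le h_m-1$), the block you select does contain the entries $a_m+t+t'$. These do not form a latin square of order $h_m$: row $t$ of the block contains $\{a_m+t,\dots,a_m+t+h_m-1\}$, which changes with $t$, and $2h_m-1$ distinct symbols occur in total. A back-circulant square would need $t+t'$ reduced modulo $h_m$, and nothing in $\mathbb{Z}_r$ does this. Your first version, on rows and columns $\{2x : x\in I_m\}$, fails for the same reason, since $x+y\notin I_m$ in general. So you have not exhibited any subsquare of order $h_2,\dots,h_k$ inside $L$, and packing the row sets by parity cannot repair this.

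In fact the partial square of \Cref{lemma: circulant square for even r} is built so that the natural block on the consecutive set $H_m=S_m+[h_m]$ is only half filled. The cells $(S_m+a,S_m+b)$ with $b-a$ even contain $S_m+\frac{a+b}{2}\in H_m$. Those with $b-a$ odd lie on the prescribed empty diagonals, so a prolongation of $L$ as in your Step 3 would put border symbols there and destroy the subsquare. Step 2 is also unsound as written. Emptying only the transversal cells outside the blocks destroys the regularity that the completion in Step 3 depends on: exactly $h_1$ empty cells in each row and column, and each symbol exactly $r-h_1$ times.

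The missing idea is a way to move $H_m$-symbols into those odd-difference cells. The paper does this in an outline rectangle rather than in $L$ itself. It forms an outline rectangle associated to $(Q,Q,P)$ with $Q=(h_1 1^r)$, amalgamating the $h_1$ border rows, columns and symbols: every empty cell of $L$ gets symbol $1$, and every symbol of $H_m$ becomes $m$.

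For an odd-difference cell $(S_m+a,S_m+b)$ with $a<b$, put $x=(S_m+1)\ominus a$ and $y=(S_m+2h_m+1)\ominus b$. Then $(S_m+a,y)$ and $(x,S_m+b)$ lie on the prescribed transversals and contain symbols of $H_m$, while $(x,y)$ is empty. So these four cells carry $\{1\},\{m\},\{m\},\{1\}$ and can be switched; the case $a>b$ is symmetric, and all these switches use distinct cells. The switch is legitimate only after amalgamation, since the two $H_m$-symbols involved are different.

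The border cells are then filled with the missing amalgamated symbols, and \Cref{thm: outline rectangle to square} lifts the result to an $\LS(h_1h_2\dots h_k)$. Without this amalgamation-and-lifting step, or an explicit substitute for it, your construction does not produce the subsquares of orders $h_2,\dots,h_k$.
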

\begin{proof}
    We construct an outline rectangle $O$ associated to $(Q,Q,P)$ where $Q = (h_1 1^r)$.

    Take the partial latin square $L$ constructed in \Cref{lemma: circulant square for even r} and place $L$ across the cells $(i,j)$ for $i,j\in1+[r]$. Amalgamate symbols so that the symbols in $\sum_{j=2}^{i-1}h_j+[h_i]$ become $i$ for all $i\in1+[r]$. Fill the empty cells with symbol $1$.

    For all $i\in1+[r]$, let $S_i = \sum_{j=2}^{i-1}h_j$ and $H_i = S_i + [h_i]$. Observe that for all $a,b\in[h_i]$, if $b\geq a$ and $b-a$ is even, then $b = a+2d$ for some $0\leq d\leq \frac{h_i-1}{2}$ and $L(S_i+a,S_i+b) = L(S_i+b,S_i+a) = S_i + a+d\in H_i$ since $b-a\leq h_i-1\leq h_2-1$. Thus, $O(S_i+1+a,S_i+1+b) = \{i\}$ for all $a,b\in[h_i]$.

    If $b\ominus a$ is instead odd, then $L(S_i+a,S_i+b)$ is empty and $O(S_i+1+a,S_i+1+b) = \{1\}$. If $a<b$, then consider also the entries of $L$ in cells $(S_i+a,y)$, $(x,S_i+b)$ and $(x,y)$ where $x = (S_i+1)\ominus a$ and $y = (S_i + 2h_i +1)\ominus b$. Since $b-a$ is odd and $a<b$, $y\ominus(S_i+a) = 2h_i+1-a-b\leq 2h_i-2$ and is even, $(S_i+b)\ominus x = b+a-1 \leq 2h_i-2$ and is even, and $y\ominus x = 2h_i-(b-a)\leq 2h_1-1$ and is odd. Thus, for $d = \frac{1}{2}(2h_i+1-a-b)\leq h_i-a$ and $a\leq d' = \frac{1}{2}(a+b-1)\leq h_i-1$, $L(S_i+a,y) = L(S_i+a,(S_i+a)\oplus2d) = S_i + a + d\in H_i$, $L(x,S_i+b) = L(x,x\oplus 2d') = S_i+1-a+d\in H_i$ and $L(x,y)$ is empty. Correspondingly, $O(S_i+1+a,y+1) = \{i\}$, $O(x+1,S_i+1+b) = \{i\}$ and $O(x+1,y+1) = \{1\}$.

    Therefore, the set of cells $\{(S_i+1+a,S_i+1+b),(S_i+1+a,y+1),(x+1,S_i+1+b),(x+1,y+1)\}$ form a subsquare in $O$. Swapping the entries of $\{i\}$ and $\{1\}$ makes $O(S_i+1+a,S_i+1+b) = \{i\}$ for all $a,b\in[h_i]$ where $a<b$.

    By repeating this with the cells $\{(S_i+1+b,S_i+1+a),(S_i+1+b,x+1),(y+1,S_i+1+a),(y+1,x+1)\}$, the cells $O(a+1,b+1)$ for all $a,b\in H_i$ contain $\{i\}$.

    Clearly, no cells $(S_i+a,y)$ or $(x,S_i+b)$ are repeated across different values of $i$ since those cells of $L$ contain an element of $H_i$. Also, $y-1 = x\oplus 2d$ for $a\leq d = \frac{1}{2}(2h_i -1+ a -b)\leq h_i-1$ and $x\oplus d\in H_i$, so none of the $(x,y)$ cells are shared between $i$ values either.

    The cells $(i,j)$ of $O$ for $i,j\in1+[r]$ are filled and have the required subsquares. The remaining cells are $(1,1)$, $(i,1)$ and $(1,j)$.

    Take $O(1,1) = h_1^2\{1\}$. For each column $j\in[r]$ of $L$, there are $h_1$ symbols that do not appear in that column (since there were $h_1$ empty cells in each column). Since we have not changed the number of copies of the amalgamated symbols in column $j+1$ of $O$, there are $h_1$ copies of symbols in $[k]\setminus[1]$ that do not appear in column $j+1$ of $O$. Place these $h_1$ copies in cell $(1,j+1)$. Repeat this for the rows to fill the cells $(j+1,1)$. Since each symbol of $[r]$ appears $r-h_1$ times in $L$ (once in each of the $r-h_1$ transversals), there are $h_1h_\ell$ copies of $\ell\in[k]\setminus[1]$ in each of the first row and first column of $O$.

    Therefore, $O$ is a completed outline rectangle for an $\LS(h_1\dots h_k)$.
\end{proof}

To prove the main result of this section we use the realizations constructed above and combine them with outline arrays. The following lemma constructs the required outline arrays.

\begin{lemma}
\label{lemma: ILS L thing}
    For $k\geq m\geq 2$ and $h_{m+1}\geq \dots\geq h_k$, if $(m-1)b\geq c\sum_{i=m+1}^kh_i$ then there exists an outline array for the frequency array $F$ of order $k$ where for all $i,j\in[k]$
    $$F(i,j) = \begin{cases}
        b, & \text{if $i,j\in[m]$ and $i\neq j$ unless $i=j=1$,}\\
        ch_j, & \text{if $i\in[m]$ and $j\geq m+1$,}\\
        ch_i, & \text{if $j\in[m]$ and $i\geq m+1$,}\\
        0, & \text{otherwise.}
    \end{cases}$$
    Furthermore, if $m=5$, $2b\geq a\geq b$ and $4b\geq c\sum_{i=m+1}^kh_i$ then there exists an outline array for the frequency array $F'$ of order $k$ where for all $i,j\in[k]$
    $$F(i,j) = \begin{cases}
        a, & \text{if $i\in[5]$ and $j\in[2]$, or $j\in[5]$ and $i\in[2]$, and $(i,j)\neq (2,2)$,}\\
        b, & \text{if $i,j\in2+[3]$ and $i\neq j$,}\\
        ch_j, & \text{if $i\in[5]$ and $j>5$,}\\
        ch_i, & \text{if $j\in[5]$ and $i>5$,}\\
        0, & \text{otherwise.}
    \end{cases}$$
\end{lemma}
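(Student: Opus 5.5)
The plan is to build the outline array directly, cell block by cell block, separating the ``small'' indices $[m]$ from the ``large'' indices $\{m+1,\dots,k\}$. Write $S=\sum_{i=m+1}^k h_i$. Because $F(i,j)=0$ whenever $i,j>m$, the entries are forced in two places. A cell $(i,j)$ with $i\in[m]$ and $j>m$ can only contain symbols from $[m]$, since symbol $\ell>m$ must appear $F(\ell,j)=0$ times in column $j$. Symmetrically, a cell $(i,j)$ with $i>m$ and $j\in[m]$ contains only symbols of $[m]$, and the block $\{m+1,\dots,k\}^2$ is empty. So all copies of a large symbol $\ell$ (there are $ch_\ell$ of them in each row and column indexed by $[m]$) must sit inside the $[m]\times[m]$ block.

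\textbf{Step 1 (border cells).} For $i\in[m]$ and $j>m$, put $ch_j$ copies of the single symbol $\sigma(i)$ in cell $(i,j)$, where $\sigma$ is a fixed permutation of $[m]$ with no fixed points except possibly $1$; for instance $\sigma(i)=i\oplus 1$ taken modulo $m$. Use the transposed rule for the cells $(j,i)$.
\begin{itemize}
\item Column $j>m$ then receives $ch_j$ copies of each symbol of $[m]$, exactly as required.
\item Row $i\in[m]$ receives $cS$ copies of $\sigma(i)$ in these cells. Since $F(i,\sigma(i))=b$ (as $\sigma(i)\neq i$), we need $cS\le b$. If this is too strong, I would instead spread the $cS$ copies over several symbols $\sigma_1(i),\sigma_2(i),\dots$ coming from several derangements, each contributing at most $b$.
\end{itemize}
The hypothesis $(m-1)b\ge cS$ is exactly the condition that the total $[m]$-symbol demand of row $i\neq 1$, namely $(m-1)b$, can absorb the $cS$ units used here. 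So I would choose a nonnegative integer matrix $Y$ recording how many copies of each small symbol go to the border, with row sums $cS$, column sums $cS$, entries at most $b$, and zero diagonal except at $(1,1)$. Such a $Y$ exists by a standard flow or Birkhoff-type argument. I would then realise $Y$ by taking sums of permutation matrices.

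\textbf{Step 2 (inside the $[m]\times[m]$ block).} Row $i$ of the block has $cS$ cells' worth of room for large symbols and must contain $ch_\ell$ copies of each $\ell>m$. I would first merge all large symbols into one super-symbol of weight $cS$, which is the reverse viewpoint of \Cref{lemma: summing rows/columns of outline array}. Then I would choose a symmetric integer matrix $X=(x_{ij})$ supported off the diagonal of $[m]$, with row and column sums $cS$ and with $x_{ij}\le b$, giving the number of large symbols in cell $(i,j)$. The rest of cell $(i,j)$ is filled with small symbols so that each small symbol's row and column counts (after subtracting $Y$) come out right. This is a transportation problem with all margins consistent, so an integer solution exists by the integrality of network flows. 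Finally, I would split the super-symbol back into the individual large symbols, with $ch_\ell$ copies of $\ell$ in each row and column. This needs an $m\times m$ integer matrix $X$ to be written as $\sum_\ell X_\ell$ with every row and column sum of $X_\ell$ equal to $ch_\ell$. This follows from König's theorem: $X$ is the biadjacency matrix of a $cS$-regular bipartite multigraph, which decomposes into $cS$ perfect matchings, and these can be grouped into classes of sizes $ch_\ell$.

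\textbf{Step 3 (the second array $F'$).} For $m=5$, I would run the same scheme. The only change is that rows and columns $1,2$ have weight $a$ in place of $b$, and the conditions $2b\ge a\ge b$ and $4b\ge cS$ are what the margin and capacity checks of the transportation problems need. One could alternatively decompose $F'$ as $F$ plus a frequency array supported on rows and columns $1,2$ with entries $a-b$, build an outline array for each piece, and add them using \Cref{lemma: sum freq arrays}.

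I expect the main obstacle to be the capacity bookkeeping in Steps 1--2. Every cell must contain a nonnegative number of each symbol, and the diagonal zeros, including the exceptional $(1,1)$ entry, must be respected. Verifying that the derived transportation problems are feasible uses precisely $(m-1)b\ge cS$, and in the second part the inequalities between $a$ and $b$. The splitting step via König's theorem should be routine.
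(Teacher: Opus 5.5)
\textbf{The gap is in Step 2.} Once the borders are fixed, you still have to place the small symbols inside the $[m]\times[m]$ block. That means finding nonnegative integers $O_\ell(i,j)$, $i,j,\ell\in[m]$, with prescribed cell sums, prescribed row--symbol sums and prescribed column--symbol sums. This is a planar three-index transportation problem, i.e.\ an outline-array problem of exactly the kind the paper studies. It is not a network flow. Consistent margins guarantee neither an integral nor even a real nonnegative solution.

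Your recipe actually fails on a small instance. Take $m=3$, $k=4$, $h_4=2$, $c=1$, $b=2$, so $(m-1)b=4\ge cS=2$. Choose the borders
$O(1,4)=\{1,1\}$, $O(2,4)=\{3,3\}$, $O(3,4)=\{2,2\}$, and, transposed, $O(4,1)=\{1,1\}$, $O(4,2)=\{3,3\}$, $O(4,3)=\{2,2\}$.
This $Y$ meets every requirement of your Step 1: row and column sums $2$, entries at most $b$, zero diagonal except at $(1,1)$. After subtracting the borders:
\begin{itemize}
\item row $1$ still needs only symbols $2$ and $3$;
\item columns $2$ and $3$ still need only symbol $1$.
\end{itemize}
So cells $(1,2)$ and $(1,3)$ can hold no small symbol, and together they would need $4$ copies of the large symbol. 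But row $1$ contains exactly $cS=2$ of them. All margins are consistent, yet the interior is infeasible for every choice of $X$, even over the reals. The border construction and the K\H{o}nig splitting of the large super-symbol are fine. The borders and the interior, however, must be chosen together, and showing that a compatible choice exists is the whole content of the lemma.

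\textbf{What the paper does instead.} It never solves a three-index problem directly. It partitions the multiset containing $ch_i$ copies of each $i>m$ into $b$ parts $A_\ell$ with $|A_\ell|\le m-1$; this is exactly where $(m-1)b\ge cS$ is used. Each unit layer $F_\ell$ is then realized from a genuine latin square $\LS(1^m|A_\ell|^1)$, which exists by the known realization results (a hand-made array covers $m=2$, $|A_\ell|=0$). The layer is amalgamated via \Cref{lemma: summing rows/columns of outline array}, and the $b$ layers are added via \Cref{lemma: sum freq arrays}.

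\textbf{Your fallback for $F'$ also fails.} The difference $G=F'-F$ has no outline array when $a>b$:
\begin{itemize}
\item row $3$ of $G$ contains only symbols $1$ and $2$, and column $2$ contains no symbol $2$;
\item so cells $(3,2),(4,2),(5,2)$ each contain only symbol $1$, giving $3(a-b)$ copies of symbol $1$ in column $2$;
\item but column $2$ contains only $a-b$ copies of symbol $1$.
\end{itemize}
The paper instead uses $a-b$ ``double'' layers, with entries $2$ in rows and columns $1,2$ and entries $1$ on the $\{3,4,5\}$ block. These come with explicit outline arrays for border loads $|A_\ell|\in\{0,\dots,4\}$. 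This is precisely where the hypotheses $a\le 2b$ and $4b\ge cS$ enter.
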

\begin{proof}
    Let $A$ be a multiset consisting of $ch_i$ copies of $i$ for each $i\in [k]\setminus[m]$. Since $|A| = c\sum_{\ell=4}^{k}h_\ell \leq (m-1)b$, partition the set $A$ into multisets $A_1,A_2,\dots,A_b$, where $0\leq |A_\ell| \leq m-1$ for all $\ell\in[b]$.

    Let $A_\ell(s)$ be the number of copies of $s$ in $A_\ell$, for $\ell\in[b]$ and $s\in [k]\setminus[m]$.
    
    For each $\ell\in[b]$, let $F_\ell$ be a frequency array of order $k$ where for all $i,j\in[k]$
    $$F_\ell(i,j) = \begin{cases}
        1, & \text{if $i,j\in[m]$ and $i\neq j$ unless $i=j=1$,}\\
        A_\ell(i), & \text{if $i>m$ and $j\leq m$,}\\
        A_\ell(j), & \text{if $i\leq m$ and $j>m$,}\\
        0, & \text{otherwise.}
    \end{cases}$$

    An outline array corresponding to $F_\ell$ for $\ell\in[b]$ is found by placing a 1 in cell $(1,1)$ of an $\LS(1^m|A_\ell|^1)$, which always exists by \Cref{thm: small k squares,thm: a^k square,thm: squaresatmost2} since $|A_\ell|\leq m-1$ unless $m=2$ and $|A_\ell|=0$. In this last case, let $O_\ell$ be an outline array with $O(1,1) = \{2\}$ and $O(1,2) = O(1,2) = \{1\}$.
    
    Use the partition $\big\{\{i\}\mid i\in[m]\big\}\cup\left\{m+\sum_{j=m+1}^{i-1}A_\ell(j)+[A_\ell(i)]\mid i\in[k]\setminus[m]\right\}$ and \Cref{lemma: summing rows/columns of outline array} to obtain the required outline array $O_\ell$.

    If $m=5$, for $\ell\in[a-b]\subseteq[b]$, instead let
    $$F_\ell(i,j) = \begin{cases}
        2, & \text{if $i\in[5]$ and $j\in[2]$, or $j\in[5]$ and $i\in[2]$, and $(i,j)\neq(2,2)$,}\\
        1, & \text{if $i,j\in2+[3]$ and $i\neq j$,}\\
        A_\ell(i), & \text{if $i>5$ and $j\leq 5$,}\\
        A_\ell(j), & \text{if $i\leq 5$ and $j>5$,}\\
        0, & \text{otherwise.}
    \end{cases}$$

    Use the same partition as above with the appropriate array from \Cref{fig: ILS 5 row bits}.

    \begin{figure}[h]
        \centering
    \begin{subfigure}{0.4\textwidth}
        $$\begin{array}{|c|c|c|c|c|}\hline
            4,5 & 3,5 & 1,2 & 1,2 & 3,4 \\ \hline
            3,4 &  & 4,5 & 3,5 & 1,1 \\ \hline
            1,1 & 4,5 &  & 2 & 2 \\ \hline
            3,5 & 1,1 & 2 &  & 2 \\ \hline
            2,2 & 3,4 & 1 & 1 &  \\ \hline
        \end{array}$$
        \caption{$|A_\ell| = 0$}
    \end{subfigure}
    \begin{subfigure}{0.4\textwidth}
        $$\begin{array}{|c|c|c|c|c|c|}\hline
            4,5 & 4,5 & 1,1 & 3,6 & 2,3 & 2 \\ \hline
            3,3 &  & 4,6 & 1,5 & 1,4 & 5 \\ \hline
            2,2 & 4,5 &  & 1 & 6 & 1 \\ \hline
            5,6 & 1,1 & 2 &  & 2 & 3 \\ \hline
            1,1 & 3,6 & 2 & 2 &  & 4 \\ \hline
            4 & 3 & 5 & 2 & 1 &  \\ \hline
        \end{array}$$
        \caption{$|A_\ell| = 1$}
    \end{subfigure}
    
    \begin{subfigure}{0.4\textwidth}
        $$\begin{array}{|c|c|c|c|c|c|c|}\hline
            4,5 & 3,5 & 1,2 & 1,7 & 2,6 & 3 & 4 \\ \hline
            3 &  & 1,5 & 1,6 & 4,7 & 4 & 5 \\ \hline
            4,7 & 1,6 &  & 5 & 2 & 1 & 2 \\ \hline
            2,2 & 1,7 & 6 &  & 1 & 5 & 3 \\ \hline
            1,6 & 3,4 & 7 & 2 &  & 2 & 1 \\ \hline
            1 & 5 & 4 & 2 & 3 &  &  \\ \hline
            5 & 4 & 2 & 3 & 1 &  &  \\ \hline
        \end{array}$$
        \caption{$|A_\ell| = 2$}
    \end{subfigure}
    \begin{subfigure}{0.4\textwidth}
        $$\begin{array}{|c|c|c|c|c|c|c|c|}\hline
            3,5 & 4,8 & 1,6 & 5,7 & 2,2 & 3 & 4 & 1 \\ \hline
            3,4 &  & 1,8 & 3,6 & 1,7 & 4 & 5 & 5 \\ \hline
            1,8 & 1,7 &  & 2 & 6 & 5 & 2 & 4 \\ \hline
            2,6 & 1,5 & 7 &  & 8 & 1 & 3 & 2 \\ \hline
            1,7 & 4,6 & 2 & 8 &  & 2 & 1 & 3 \\ \hline
            2 & 5 & 4 & 1 & 3 &  &  &  \\ \hline
            5 & 3 & 2 & 1 & 4 &  &  &  \\ \hline
            4 & 3 & 5 & 2 & 1 &  &  &  \\ \hline
        \end{array}$$
        \caption{$|A_\ell| = 3$}
    \end{subfigure}

    \begin{subfigure}{0.5\textwidth}
        $$\begin{array}{|c|c|c|c|c|c|c|c|c|}\hline
            2,4 & 3,6 & 4,9 & 1,8 & 2,7 & 5 & 3 & 1 & 5 \\ \hline
            3,5 &  & 1,8 & 6,7 & 1,9 & 4 & 5 & 4 & 3 \\ \hline
            2,7 & 5,8 &  & 9 & 6 & 1 & 1 & 2 & 4 \\ \hline
            1,6 & 1,9 & 7 &  & 8 & 3 & 2 & 5 & 2 \\ \hline
            8,9 & 1,7 & 6 & 2 &  & 2 & 4 & 3 & 1 \\ \hline
            5 & 4 & 1 & 2 & 3 &  &  &  &  \\ \hline
            4 & 3 & 2 & 5 & 1 &  &  &  &  \\ \hline
            3 & 4 & 5 & 1 & 2 &  &  &  &  \\ \hline
            1 & 5 & 2 & 3 & 4 &  &  &  &  \\ \hline
        \end{array}$$
        \caption{$|A_\ell| = 4$}
    \end{subfigure}
        \caption{Outline arrays for $F_\ell$ where $\ell\in[a-b]$}
        \label{fig: ILS 5 row bits}
    \end{figure}

    Observe that $F = \sum_{\ell=1}^{b} F_\ell$. Thus, by \Cref{lemma: sum freq arrays}, an outline array exists corresponding to $F$.
\end{proof}

By putting the previous results together, we construct an $\ILS(n;h_2\dots h_k)$ for all partitions $(h_2\dots h_k)$ where $n=h_2+\sum_{i=2}^kh_i$.

\begin{theorem}
\label{thm: ILS con h3<=r/4}
    Let $(h_1\dots h_k)$ be an integer partition, where $h_1=h_2\geq h_3 \ge \dots \ge h_k$, $r=\sum_{i=3}^k h_i$ and $h_3 \le \frac{1}{4}(r+1)$. There exists an $\ILS(2h_1+r;h_2\dots h_k)$.
\end{theorem}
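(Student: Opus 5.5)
The plan is to obtain the $\ILS$ as a refinement of a realization built by the circulant constructions, namely the lemma of \cite{kemp2025threedisjoint} for odd $r$ and \Cref{even r construction} for even $r$. Write $n=2h_1+r$. Since the free part $n-\sum_{i\ge 2}h_i$ equals $h_1$, \Cref{lemma: ILS from outline} (after reordering parts) says it suffices to find an outline square associated to $(h_1h_1h_3\dots h_k)$. In it, index $2$ (a copy of $h_1$) and every index $i\ge 3$ must be subsquares, while index $1$ is the unconstrained part.

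\textbf{Main case: $2h_1\le r+1-2h_3$.} Since $2h_3\le 2h_1$ trivially, the hypotheses of the circulant constructions hold with the first part equal to $H=2h_1$, and $h_3\le\frac14(r+1)$ is assumed. We do not take the finished realization $\LS(2h_1h_3\dots h_k)$. Instead we stop at the intermediate outline rectangle $O$ associated to $(Q,Q,P)$ with $Q=(2h_1\,1^r)$ and $P=(2h_1h_3\dots h_k)$, as in the proof of \Cref{even r construction}, after the swaps that create the subsquares for $i\ge 3$. In $O$:
\begin{itemize}
\item each inner row and inner column contains exactly $2h_1$ copies of symbol $1$ (the former empty cells);
\item each outer cell $(1,j+1)$ holds $2h_1$ symbols from $\{3,\dots,k\}$;
\item each symbol $\ell\ge3$ occurs $2h_1h_\ell$ times in the first row, and likewise in the first column.
\end{itemize}

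Now split the outer part into two parts of size $h_1$, replacing row, column and symbol $1$ by indices $1,2$. Set the $2\times 2$ outer block to
\[
O(1,1)=h_1^2\{2\},\quad O(1,2)=O(2,1)=h_1^2\{1\},\quad O(2,2)=h_1^2\{2\}.
\]
Because symbol $2$ is used up in row part $2$ and column part $2$, every occurrence of the old outer symbol in inner cells must become $1$ or $2$ with the right counts. The rows-versus-columns multigraph of those occurrences is $2h_1$-regular, so by K\H{o}nig's theorem it decomposes into $2h_1$ perfect matchings. Give $h_1$ matchings symbol $1$ and $h_1$ matchings symbol $2$; every inner row and column then gets $h_1$ of each.

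Similarly, the bipartite multigraph between inner columns and symbols $\ell\ge3$, with edges given by the multiset in $O(1,j+1)$, is regular of degree $2h_1$ on the column side. Split it (after blowing symbol $\ell$ up into $h_\ell$ vertices of degree $2h_1$) into two halves of degree $h_1$, one for cell $(1,j+1)$ and one for $(2,j+1)$, and do the same for rows. A direct count of rows, columns and symbols then confirms the outline conditions, with index $2$ and all $i\ge3$ being subsquares. \Cref{thm: outline rectangle to square} finishes this case. For odd $r$ the same intermediate rectangle comes from the proof in \cite{kemp2025threedisjoint}; alternatively we redo the argument of \Cref{even r construction} with the odd-$r$ partial square.

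\textbf{Remaining case: $2h_1>r+1-2h_3$.} This case is the main obstacle, because $2h_1$ is too large to be the first part of a circulant realization. Here I would use the frequency-array method. Choose $H$ in the admissible range $[2h_3,r+1-2h_3]$ with $H\equiv 0\pmod 2$, say $H=2c'$, and realize $\LS(Hh_3\dots h_k)$ split as above into two parts of size $c'$. That outline array carries part of the frequency array $F(i,j)=h_ih_j$.

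The deficit lies in the rows and columns $1,2$ only: entries $(h_1^2-c'^2)$ in cells $(1,1),(1,2),(2,1)$ and $(h_1-c')h_j$ in the crossing cells, plus $(2,2)$, which is handled by adding $h_1^2-c'^2$ to the subsquare. I would cover this deficit with the first array of \Cref{lemma: ILS L thing} with $m=2$, $b=h_1^2-c'^2$ and $c=h_1-c'$. Its hypothesis $b\ge c\,r$ reduces to $h_1+c'\ge r$, which should follow from $2h_1>r+1-2h_3$ once $c'$ is chosen near $\frac12(r+1)-h_3$. I expect to tune $c'$ and the parity carefully here, and possibly use the $m=5$ variant or \Cref{thm: ILS 2h1} when $h_1\ge 2h_3$ is far above $r$. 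The $(2,2)$ entries can be placed as a block of symbol $2$ via \Cref{lemma: sum freq arrays}. Summing the arrays gives the required outline square.
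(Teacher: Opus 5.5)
Your main case is sound but more elaborate than it needs to be. The paper simply takes $\LS((2h_1)h_3\dots h_k)$ and replaces its order-$2h_1$ subsquare by the $h_1$-inflation of the $2\times 2$ square with rows $(2,1),(1,2)$. That needs no K\H{o}nig splitting and works for odd $r$ without looking inside the cited construction. Indeed, any realization reduced modulo $((2h_1)1^r,(2h_1)1^r,(2h_1)h_3\dots h_k)$ already has the three properties you list.

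\textbf{The gap is in the remaining case $2h_1>r+1-2h_3$.} The inequality $h_1+c'\ge r$ does not follow. Since $c'\le\lfloor\frac12(r+1-2h_3)\rfloor$, at the smallest admissible $h_1$ you get $h_1+c'=r+2-2h_3$ for odd $r$ and $h_1+c'=r+1-2h_3$ for even $r$. This is below $r$ whenever $h_3\ge 2$ (odd $r$) or $h_3\ge 1$ (even $r$). For example, take $(h_1,\dots,h_6)=(2,2,1,1,1,1)$, so $r=4$. The only choice is $c'=1$, giving $b=3<4=cr$.

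This is not an artefact of \Cref{lemma: ILS L thing}. In that frequency array, column $2$ contains no symbol $2$, and rows $i\ge 3$ contain only symbols $1,2$. So the cells $(i,2)$ with $i\ge 3$ must hold $cr$ copies of symbol $1$, while column $2$ has only $b$ of them. Hence no tuning of $c'$ or parity rescues your decomposition. Your fallbacks do not apply either: \Cref{thm: ILS 2h1} would need the free part $h_1$ to be at least $2h_2=2h_1$, and the failure occurs for $h_1$ just above the threshold, not far above $r$.

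\textbf{The missing idea is to discard the outer block of the smaller square.} Take the outline square of $\ILS(2g+r;gh_3\dots h_k)$ with $g=\lfloor\frac12(r+1-2h_3)\rfloor$. Its cells $(1,1)$ and $(2,2)$ hold $g^2\{2\}$, and $(1,2)$ and $(2,1)$ hold $g^2\{1\}$. This block is self-balanced in rows, columns and symbols $1,2$, so it can be deleted and what remains is still an outline array. The deficit in cells $(1,1),(1,2),(2,1)$ is then $h_1^2$ rather than $h_1^2-g^2$. Now \Cref{lemma: ILS L thing} with $m=2$, $b=h_1^2$, $c=h_1-g$ needs $h_1^2\ge(h_1-g)r$. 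Here $h_1^2-(h_1-g)r$ equals $(h_1-\frac r2)^2+\frac r4(r+2-4h_3)$ for odd $r$ and $(h_1-\frac r2)^2+\frac r4(r-4h_3)$ for even $r$. Both are nonnegative because $h_3\le\frac14(r+1)$, which forces $4h_3\le r$ when $r$ is even. Finally, re-add $h_1^2\{2\}$ in cell $(2,2)$ and the other diagonal subsquare blocks.
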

\begin{proof}
    If $2h_1\leq r+1-2h_3$, then by \Cref{circulant_construction} or \Cref{even r construction} there is an $\LS((2h_1)h_3\dots h_k)$ which can be changed to an $\ILS(2h_1+r;h_2\dots h_k)$ by replacing the subsquare of order $2h_1$ with an inflation by $h_1$ of the order 2 latin square shown below.
    $$\begin{array}{|c|c|}\hline
        2 & 1 \\\hline
        1 & 2 \\\hline
    \end{array}$$

    If $2h_1 > r+1-2h_3$, then let
    $$g = \begin{cases}
        \frac{1}{2}(r+1-2h_3), & \text{if $r$ is odd,}\\
        \frac{1}{2}(r-2h_3), & \text{if $r$ is even,}
    \end{cases}$$
    and let $L$ be an $\ILS(2g+r;gh_3\dots h_k)$ as constructed above and take $O$ to be the reduction modulo $(P,P,P)$ of $L$ for $P = (g^2h_3\dots h_k)$. Since $O(1,1)$ contains only $g^2$ copies of 2 and $O(1,2)$ and $O(2,1)$ contain $g^2$ copies of 1, removing all entries from $(1,2)$, $(2,1)$ and $(i,i)$ for all $i\in[k]$ makes $O$ an outline array for the frequency array $F$ of order $k$ where for all $i,j\in[k]$
    $$F(i,j) = \begin{cases}
        h_ih_j, & \text{if $i,j\geq 3$ and $i\neq j$,}\\
        gh_j, & \text{if $i\in[2]$ and $j\geq 3$,}\\
        gh_i, & \text{if $j\in[2]$ and $i\geq 3$,}\\
        0, & \text{otherwise.}
    \end{cases}$$
    Let $F^*$ be a frequency array of order $k$ where for all $i,j\in[k]$
    $$F^*(i,j) = \begin{cases}
        h_1^2, & \text{if $i,j\in[2]$ and $(i,j)\neq (2,2)$,}\\
        (h_1-g)h_j, & \text{if $i\in[2]$ and $j\geq 3$,}\\
        (h_1-g)h_i, & \text{if $j\in[2]$ and $i\geq 3$,}\\
        0, & \text{otherwise.}
    \end{cases}$$
    By \Cref{lemma: ILS L thing}, there exists an outline array $O^*$ corresponding to $F^*$ if $h_1^2\geq (h_1-g)\sum_{i=3}^kh_i$.

    Observe that for odd $r$
    \begin{align*}
        h_1^2 - (h_1-g)r &= h_1^2 - (h_1-\frac{1}{2}r - \frac{1}{2} + h_3)r\\
        &= (h_1-\frac{1}{2}r)^2 + \frac{1}{4}r(r+2-4h_3)\\
        &\geq 0
    \end{align*}
    since $h_3\leq \frac{1}{4}(r+1)$, and for even $r$
    \begin{align*}
        h_1^2 - (h_1-g)r &= h_1^2 - (h_1-\frac{1}{2}r + h_3)r\\
        &= (h_1-\frac{1}{2}r)^2 + \frac{1}{4}r(r-4h_3)\\
        &\geq 0.
    \end{align*}

    Thus, the outline array $O^*$ exists, and taking the cell-wise union of $O$ with $O^*$ gives an outline array for an $\ILS(2h_1+r;h_2\dots h_k)$.
\end{proof}

\begin{theorem}
\label{thm: ILS con h_1}
    Let $(h_1\dots h_k)$ be an integer partition, where $h_1=h_2\geq h_3 \ge \dots \ge h_k$ and $r=\sum_{i=3}^k h_i$. There exists an $\ILS(2h_1+r;h_2\dots h_k)$.
\end{theorem}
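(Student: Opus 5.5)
Theorem \ref{thm: ILS con h3<=r/4} already covers the case $h_3\le\frac14(r+1)$. The plan is therefore to reduce the remaining case $h_3>\frac14(r+1)$ to a situation where either that theorem or a realization result applies.

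\textbf{Step 1: the cases with only a few subsquares of order $h_3$.} When $h_3>\frac14(r+1)$ we have $r<4h_3-1$, so there are at most three subsquares of order $h_3$. Write $n=2h_1+r$ and note that $h_1\ge h_3$. Let $t$ be the number of indices $i\ge 3$ with $h_i=h_3$.
\begin{itemize}
\item If $t=3$, then $r=3h_3+(\text{smaller parts})$, and the bound $r<4h_3-1$ forces the remaining parts to sum to less than $h_3$. I would then use Theorem \ref{thm: 3 subsquares} directly.
\item Otherwise, I would try to split the two $h_1$ blocks. The realization $\LS(h_1^2 h_3\dots h_k)$ does not exist for $k=4$ by Theorem \ref{thm: small k squares}, so this route is not available.
\end{itemize}

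\textbf{Step 2: direct outline arrays for the main case.} The more robust plan mimics the second half of the proof of Theorem \ref{thm: ILS con h3<=r/4}. Build the outline square associated to $P=(h_1h_1h_3\dots h_k)$ as a sum of three outline arrays.
\begin{enumerate}
\item An outline array for the frequency array with $h_ih_j$ for $i\neq j$, $i,j\ge 3$, together with entries $gh_j$ in the first two rows and columns. This comes from the reduction of an ILS with a smaller $g$ that satisfies the hypothesis of Theorem \ref{thm: ILS con h3<=r/4}. If no such $g$ exists, I would instead take the reduction of an $\ILS(h_3+r';\dots)$ for a suitable sub-collection of parts, using Theorem \ref{thm: 3 subsquares}.
\item The diagonal subsquare blocks $h_i^2$.
\item An array for the frequency array $F^*$, obtained from Lemma \ref{lemma: ILS L thing} with $m=2$ or $m=5$. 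Combining the three arrays uses Lemma \ref{lemma: sum freq arrays}.
\end{enumerate}
The required inequality has the shape $h_1^2\ge (h_1-g)r$. Its worst case is at $h_1=\frac r2$, so I need $g\ge r/4$. This suggests taking $g$ as large as possible.

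\textbf{Step 3: choosing the large $g$.} To get a large $g$, I would first apply Theorem \ref{thm: ILS 2h1}-type realizations such as $\LS(g^3 h_3\dots)$ from Theorem \ref{thm: 3 subsquares}, with $g=h_3$. From such a realization I would merge two of the three $g$-blocks into the pair of $h_1$-rows, giving an array with $2g$ in place of the two $h_1$'s. The third $g$-block is the order-$h_3$ subsquare itself.

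\textbf{Step 4: checking the inequality (the main obstacle).} I expect the hardest step to be verifying $h_1^2\ge(h_1-g)r'$ when $g=h_3>\frac14(r+1)$. Here $r'$ is the sum of the parts that still need to be distributed. Because $g>r/4$ in this regime, the inequality should follow from the same completion of squares as before, namely
$$(h_1-\tfrac r2)^2+\tfrac r4(4g-r)\ge 0.$$
The difficulty is that one subsquare block of order $h_3$ gets absorbed into the $g^3$ structure, and its interaction with the $F^*$ array must be handled. This is where I would use the $m=5$ version of Lemma \ref{lemma: ILS L thing}, with $a$ between $b$ and $2b$, to fill the combined rows and columns of the two $h_1$ blocks and the three $g$ blocks.
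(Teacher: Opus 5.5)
Your proposal has a genuine gap: in the regime $h_3>\frac14(r+1)$, none of the bases you propose can be completed.

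\textbf{Step 2 fallback cannot arise.} The hypothesis of \Cref{thm: ILS con h3<=r/4} is $h_3\le\frac14(r+1)$, and it involves only $h_3,\dots,h_k$. So no choice of a smaller first part $g$ makes that theorem applicable while $h_3,\dots,h_k$ stay fixed.

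\textbf{Step 3 base is not analogous.} You use $\LS(h_3^3h_4\dots h_k)$, with two $h_3$-blocks playing the two $h_1$-blocks. This is not the structure of the base in \Cref{thm: ILS con h3<=r/4}. There, the two $g$-blocks come from an inflated $2\times2$ square, so the cross cells $(1,2),(2,1)$ of the reduction hold only copies of symbol $1$ and can be discarded. In a realization both blocks are subsquares. The cross cells then already contain $h_3^2$ symbols from blocks $\ge3$, and these cannot be discarded. The residual frequency array is therefore forced:
\begin{itemize}
\item $F^*(1,1)=h_1^2$ and $F^*(1,2)=F^*(2,1)=h_1^2-h_3^2$;
\item $F^*(i,j)=F^*(j,i)=(h_1-h_3)h_j$ for $i\in[2]$, $j\ge3$;
\item $F^*=0$ elsewhere.
\end{itemize}
In any outline array for $F^*$, column $j\ge3$ contains only symbols $1$ and $2$, and row $2$ contains no symbol $2$. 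Hence every cell $(2,j)$, $j\ge 3$, consists entirely of symbol $1$. This forces $(h_1-h_3)r\le h_1^2-h_3^2$, i.e.\ $h_1+h_3\ge r$ whenever $h_1>h_3$.

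That is not the inequality $h_1^2\ge(h_1-g)r$ you check in Step 4, and it fails in your regime. Take $h_1=h_2=6$ and $h_3=h_4=h_5=5$, so $r=15$ and $h_3>\frac14(r+1)$. Your inequality reads $36\ge15$, but row $2$ would need $15$ copies of symbol $1$ while $F^*(2,1)=11$. The $m=5$ version of \Cref{lemma: ILS L thing} cannot repair this: the residual is determined by the base and has no outline array at all.

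\textbf{Step 1 is also limited.} \Cref{thm: 3 subsquares} only handles $h_1=h_3$ there, via $\LS(h_3^5h_6\dots h_k)$. It never produces a subsquare of order $h_1>h_3$.

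\textbf{The missing idea} is to shrink the large parts themselves, not only $h_1$. Your observation that at most three parts equal $h_3$ is correct, and the paper uses it. The paper then proceeds as follows.
\begin{itemize}
\item It lowers $h_3$ (together with $h_4,h_5$ when they equal $h_3$) to some $g_3$, and lowers $h_1$ by the same amount $c=h_3-g_3$. The smaller instance then falls under \Cref{thm: ILS con h3<=r/4} or an earlier case.
\item It restores the difference with \Cref{lemma: ILS L thing} for $m=5,4,3$. The $m$ enlarged blocks (the two $h_1$-blocks and the shrunk $h_3$-blocks) receive off-diagonal entries $b=h_3^2-g_3^2$ or $b=h_1h_3-g_1g_3$. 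The hypothesis $(m-1)b\ge c\sum_{i>m}h_i$ holds precisely because $h_3$ is large compared with the remaining parts.
\end{itemize}
This is organised as three chained cases:
\begin{itemize}
\item $h_3=h_4=h_5$: take $g_3=r-3h_3+1$ and cap $h_1$ at $h_1'=\min\{h_1,3h_3+2g_3\}$. This needs the $m=5$ variant plus $m=2$ arrays.
\item $h_3=h_4>h_5$: shrink $h_3,h_4$ to $h_5$ and reduce to the first case.
\item $h_3>h_4$: shrink $h_3$ to $\max\{h_4,\lfloor\frac13(\sum_{i\ge4}h_i+1)\rfloor\}$.
\end{itemize}
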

\begin{proof}
    If $h_3\leq \frac{1}{4}(r+1)$, then an $\ILS(2h_1+r;h_2\dots h_k)$ exists by \Cref{thm: ILS con h3<=r/4}. Therefore we can assume that $h_3 >\frac{1}{4}(r+1)$.
    
    If $h_3=h_4=h_5=h_6$, then $h_3\leq \frac{1}{4}r$. Thus, we assume that $h_3>h_6$.

    \medskip

    We first consider the case where $h_3=h_4=h_5$; let $m = r-3h_3$.
    
    Since $h_3>\frac{1}{4}(r+1)$, $h_3>m+1$. Set $g_3 = g_4 = g_5 = m+1$, $c = h_3-g_3$, $h_1' = \min\{h_1,3h_3+2g_3\}$ and $g_1=g_2 = h_1'-c$. Then $g_3 = \frac{1}{4}(3g_3 + (m+1)) = \frac{1}{4}(3g_3+1+\sum_{i=6}^kh_i)$, and so an $\ILS(2g_1+3g_3+m;g_1g_3^3h_6\dots h_k)$ exists by \Cref{thm: ILS con h3<=r/4}. Let $O$ be the reduction modulo $(Q,Q,Q)$ of this latin square where $Q = (g_1^2g_3^3h_6\dots h_k)$ and $O(i,i)$ contains only symbol $i$ for all $i\in[k]\setminus[1]$.
    
    Set $a = \min\{2(h_3^2-g_3^2),h_1'h_3-g_1g_3\}$. Let $F_1$ be a frequency array of order $k$ where for all $i,j\in[k]$
    $$F_1(i,j) = \begin{cases}
        a, & \text{if $i\in[5]$ and $j\in[2]$, or $j\in[5]$ and $i\in[2]$, and $(i,j)\neq(2,2)$,}\\
        h_3^2 - g_3^2, & \text{if $i,j\in2+[3]$ and $i\neq j$,}\\
        ch_j, & \text{if $i\in[5]$ and $j>5$,}\\
        ch_i, & \text{if $j\in[5]$ and $i>5$,}\\
        0, & \text{otherwise.}
    \end{cases}$$
    Observe that $h_3^2 - g_3^2 = (h_3 + g_3)c >(m+1)c$, thus $4(h_3^2 - g_3^2) \geq cm$. Since $a\leq 2(h_3^2-g_3^2)$, an outline array $O_1$ corresponding to $F_1$ exists by \Cref{lemma: ILS L thing}.

    Let $F_2$ and $F_3$ be frequency arrays of order $k$ where for all $i,j\in[k]$
    $$F_2(i,j) = \begin{cases}
        (h'_1)^2-g_1^2-a, & \text{if $i,j\in[2]$ and $i,j\neq(2,2)$,}\\
        h'_1h_3-g_1g_3-a, & \text{if $i\in2+[3]$ and $j\in[2]$, or $j\in2+[3]$ and $i\in[2]$,}\\
        0, & \text{otherwise,}
    \end{cases}$$
    and
    $$F_3(i,j) = \begin{cases}
        h_1^2-(h'_1)^2, & \text{if $i,j\in[2]$ and $i,j\neq(2,2)$,}\\
        (h_1-h_1')h_j, & \text{if $i\in[2]$ and $j>2$,}\\
        (h_1-h_1')h_i, & \text{if $j\in[2]$ and $i>2$,}\\
        0, & \text{otherwise.}
    \end{cases}$$
    If $a = h_1h_3-g_1g_3$, then $F_2(i,j) = 0$ unless $i,j\in[2]$. Otherwise, $a = 2(h_3^2 - g_3^2)$ and $((h'_1)^2-g_1^2-a) - 3(h'_1h_3-g_1g_3-a) = c(3h_3+2g_3-h'_1)\geq 0$ since $h'_1\leq 3h_3+2g_3$. In either case, an outline array $O_2$ corresponding to $F_2$ always exists by \Cref{lemma: ILS L thing}.

    If $h'_1 = h_1$ then all entries of $F_3$ are 0 and the corresponding outline array $O_3$ is an empty array. Otherwise, $h'_1 = 3h_3 + 2g_3 < h_1$ and $(h_1-h'_1)(h_1 + h'_1) = (h_1 - h'_1)(h_1 + 3h_3 + 2g_3) > (h_1 - h'_1)(3h_3 + m)$. Thus, the outline array $O_3$ exists by \Cref{lemma: ILS L thing}.

    Taking the cell-wise union of $O$ with $O_1$, $O_2$ and $O_3$, and increasing the number of copies of $i$ in cell $(i,i)$ for $i\in[5]\setminus[1]$, gives an outline square for an $\ILS(2h_1+r;h_2\dots h_k)$. Therefore, an $\ILS(2h_1+r;h_2\dots h_k)$ exists for all partitions $(h_1\dots h_k)$ where $h_1=h_2$ and $h_3=h_4=h_5$.
    
    \medskip

    We next consider the case where $h_3 = h_4 > h_5$; let $m = r-2h_3$.
    
    Since $h_3>\frac{1}{4}(r+1)$, $h_3>\frac{1}{2}(m+1)$. Let $c = h_3-h_5$ and $g_1 = g_2 = h_1-c$. There exists an $\ILS(2g_1+3h_5+m;g_2h_5^3h_6\dots h_k)$ either from above or from \Cref{thm: single subsquare} if $h_5=0$. Let $O$ be the reduction modulo $(Q,Q,Q)$ of this latin square where $Q = (g_1^2h_5^3h_6\dots h_k)$ and $O(i,i)$ contains only symbol $i$ for all $i\in[k]\setminus[1]$.
    
    Let $F_1$ and $F_2$ be frequency arrays of order $k$ where for all $i,j\in[k]$
    $$F_1(i,j) = \begin{cases}
        h_3^2 - h_5^2, & \text{if $i,j\in[4]$ and $i\neq j$ unless $i=j=1$,}\\
        ch_j, & \text{if $i\in[4]$ and $j\geq 5$,}\\
        ch_i, & \text{if $j\in[4]$ and $i\geq 5$,}\\
        0, & \text{otherwise,}
    \end{cases}$$
    and
    $$F_2(i,j) = \begin{cases}
        h_1^2 - h_3^2, & \text{if $i,j\in[2]$ and $i,j\neq(2,2)$,}\\
        h_1h_3 - h_3^2, & \text{if $i\in2+[2]$ and $j\in[2]$, or $j\in2+[2]$ and $i\in[2]$,}\\
        0, & \text{otherwise.}
    \end{cases}$$
    Since $3(h_3^2 - h_5^2) = 3(h_3+h_5)c\geq cm$ and $(h_1^2 - h_3^2) - 2(h_1h_3 - h_3^2) = (h_1-h_3)^2\geq 0$, outline arrays $O_1$ and $O_2$ corresponding to $F_1$ and $F_2$ exist by \Cref{lemma: ILS L thing}.

    Taking the cell-wise union of $O$ with $O_1$ and $O_2$, and increasing the number of copies of $i$ in cell $(i,i)$ for $i\in[4]\setminus[1]$, gives an outline square for an $\ILS(2h_1+r;h_2\dots h_k)$. Thus, an $\ILS(2h_1+r;h_2\dots h_k)$ exists for all partitions $(h_1\dots h_k)$ where $h_1=h_2$ and $h_3=h_4$.

    \medskip
    
    Finally, we consider the case where $h_3>h_4$; let $m = \sum_{i=4}^kh_i$.
    
    It follows that $r = m+h_3$ and $h_3>\frac{1}{3}(m+1)$. Let $g_3 = \max\{h_4,\lfloor\frac{1}{3}(m+1)\rfloor\}$, $h_3 = g_3+c$ and $g_1 = g_2 = h_1-c \geq \lfloor\frac{1}{3}(m+1)\rfloor$. Then an $\ILS(2g_1+g_3+m;g_2g_3h_4\dots h_k)$ exists from the previous case or from \Cref{thm: ILS con h3<=r/4}. Let $O$ be the reduction modulo $(Q,Q,Q)$ of this latin square where $Q = (g_1^2g_3h_4\dots h_k)$ and $O(i,i)$ contains only symbol $i$ for all $i\in[k]\setminus[1]$.
    
    Let $F_1$ and $F_2$ be frequency arrays of order $k$ where for all $i,j\in[k]$
    $$F_1(i,j) = \begin{cases}
        h_1h_3 - g_1g_3, & \text{if $i,j\in[3]$ and $i\neq j$ unless $i=j=1$,}\\
        ch_j, & \text{if $i\in[3]$ and $j\geq 4$,}\\
        ch_i, & \text{if $j\in[3]$ and $i\geq 4$,}\\
        0, & \text{otherwise,}
    \end{cases}$$
    and
    $$F_2(i,j) = \begin{cases}
        h_1^2 - h_1h_3 + g_1g_3, & \text{if $i,j\in[2]$ and $i,j\neq(2,2)$,}\\
        0, & \text{otherwise.}
    \end{cases}$$
    Note that $2(h_1h_3 - g_1g_3) = 2(h_1+h_3-c)c = 2(g_1+h_3)c \geq cm$ since $g_1\geq \lfloor\frac{1}{3}(m+1)\rfloor$ and $h_3>\frac{1}{3}(m+1)$, and $h_1^2 - g_1^2 = (2h_1-c)c \geq (h_1+h_3-c)c = h_1h_3 - g_1g_3$. Thus, outline arrays $O_1$ and $O_2$ corresponding to $F_1$ and $F_2$ exist by \Cref{lemma: ILS L thing}.

    Taking the cell-wise union of $O$ with $O_1$ and $O_2$, and increasing the number of copies of 2 and 3 in the subsquares in cells $(2,2)$ and $(3,3)$ respectively gives an outline square for an $\ILS(2h_1+r;h_2\dots h_k)$.
\end{proof}

Returning to the question at the start of this section, we prove here that for any partition $(h_2\dots h_k)$ it is sufficient to take $h_1\geq h_2$.

\begin{theorem}
\label{thm: ILS con}
    For any partition $(h_1\dots h_k)$ of $n$ where $h_1\geq h_2\geq \dots\geq h_k$, an $\ILS(n;h_2\dots h_k)$ exists.
\end{theorem}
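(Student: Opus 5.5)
The plan is to reduce to \Cref{thm: ILS con h_1} by a decomposition of the extra rows. Here $n = h_1 + \sum_{i=2}^k h_i$, so the ``free'' part has order $h_1 \ge h_2$. Write $h_1 = h_2 + t$ with $t \ge 0$. If $t = 0$, the statement is exactly \Cref{thm: ILS con h_1} applied to the partition $(h_2 h_2 h_3 \dots h_k)$, which gives an $\ILS(2h_2 + r; h_2 \dots h_k)$ with $r = \sum_{i=3}^k h_i$. So the work is to absorb the surplus $t$.

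First approach: split the surplus into further subsquares. Let $t = q h_k + s$ with $0 \le s < h_k$. I would look for a realization or ILS in which the unused block of order $h_1$ is itself covered by pieces. Specifically, apply \Cref{thm: ILS con h_1} to the partition
\[
(h_2\, h_2\, h_3 \dots h_k\, h_k^{q}\, s),
\]
with its parts reordered to be non-increasing. This is legitimate because $h_2$ is still the largest part, so the hypotheses $h_1 = h_2 \ge h_3 \ge \dots$ hold. It produces a latin square of order
\[
2h_2 + r + q h_k + s = h_1 + \sum_{i=2}^k h_i = n .
\]
This square contains disjoint subsquares of orders $h_2, h_3, \dots, h_k$, together with the extra ones. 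Forgetting the extra subsquares (those of orders $h_k$ repeated $q$ times and $s$) leaves an $\ILS(n; h_2 \dots h_k)$. Two sanity checks are needed. If $s = 0$, that part is simply omitted. Parts equal to $0$ are harmless, since \Cref{thm: ILS con h_1} is stated for integer partitions and ordering only requires $h_k \ge s$.

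I expect the main obstacle to be a subtle point. In \Cref{thm: ILS con h_1}, the first part $h_1 = h_2$ is \emph{not} a subsquare: the resulting ILS has subsquares $h_2, \dots, h_k$ of the new partition, while the first block of order $h_2$ is free. So the extra parts $h_k^q s$ really are subsquares there, and dropping them is fine, since a subset of pairwise disjoint subsquares is still pairwise disjoint. This matches the definition of $\ILS$, which requires only $k$ specified disjoint subsquares with orders summing to at most $n$.

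The remaining check is that inserting the extra parts keeps the partition non-increasing with the largest part repeated. Since $h_k \le h_2$ and $s < h_k$, appending them at the end preserves the ordering $h_2 = h_2 \ge h_3 \ge \dots \ge h_k \ge \dots \ge h_k \ge s$. Hence \Cref{thm: ILS con h_1} applies verbatim, with its own internal case analysis on $h_3$ versus $\frac14(r+1)$ absorbing everything.
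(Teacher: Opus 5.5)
Your proposal is correct and is essentially the paper's proof: pad the partition so that the free block has order equal to the largest subsquare, apply \Cref{thm: ILS con h_1}, and then forget the extra subsquares. The only difference is the padding. The paper appends $h_1-h_2$ subsquares of order $1$, whereas you append $q$ copies of $h_k$ and one part $s$. The paper's choice avoids the division with remainder and the $s=0$ bookkeeping, but the two are otherwise equivalent.
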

\begin{proof}
    Since $h_1\geq h_2$ there exists an $\ILS(n;h_2\dots h_k1^{h_1-h_2})$ by \Cref{thm: ILS con h_1}, which is also an $\ILS(n;h_2\dots h_k)$.
\end{proof}

\section*{Acknowledgement}
Funding: The author would like to acknowledge the support of the Australian Government through a Research Training Program (RTP) Scholarship.

\printbibliography

\end{document}